\newtheorem{theorem}{Theorem}[section]
\newtheorem{lemma}[theorem]{Lemma}
\theoremstyle{definition}
\newtheorem{cor}[theorem]{Corollary}
\newtheorem{remark}[theorem]{Remark}
\numberwithin{equation}{section}
\def\Xint#1{\mathchoice
   {\XXint\displaystyle\textstyle{#1}}%
   {\XXint\textstyle\scriptstyle{#1}}%
   {\XXint\scriptstyle\scriptscriptstyle{#1}}%
   {\XXint\scriptscriptstyle\scriptscriptstyle{#1}}%
   \!\int}
\def\XXint#1#2#3{{\setbox0=\hbox{$#1{#2#3}{\int}$}
     \vcenter{\hbox{$#2#3$}}\kern-.5\wd0}}
\def\dashint{\Xint-}
\def\moverlay{\mathpalette\mov@rlay}
\def\mov@rlay#1#2{\leavevmode\vtop{%
   \baselineskip\z@skip \lineskiplimit-\maxdimen
   \ialign{\hfil$\m@th#1##$\hfil\cr#2\crcr}}}
\newcommand{\charfusion}[3][\mathord]{
    #1{\ifx#1\mathop\vphantom{#2}\fi
        \mathpalette\mov@rlay{#2\cr#3}
      }
    \ifx#1\mathop\expandafter\displaylimits\fi}
\newcommand{\R}{\mathbb R}
\newcommand{\al}{\alpha}
\newcommand{\M}{\mathcal{M}}
\newcommand{\1}{\mathbf{1}}
\newcommand{\D}{\mathscr D}
\newcommand{\Z}{\mathbb Z}
\newcommand{\A}{\mathsf A}
\begin{document}

\title[Bilinear fractional integral operator: A union condition]{Weighted estimates for a bilinear fractional integral operator and its commutators:\\ A union condition}

\author{Cong Hoang}
\address{Cong Hoang, Department of Mathematics, Florida A\&M University}
\email{cong.hoang@famu.edu}


\allowdisplaybreaks


\setlength\parindent{0pt}

\keywords{Bilinear, fractional integral, operators, commutators, weighted inequalities, bump conditions}

\maketitle

\begin{abstract} The main theme of this paper is to give sufficient conditions for the weighted boundedness of the bilinear fractional integral operator $\mathsf{BI}_\al$. The proposed condition involves the union of multilinear Muckenhoupt-type conditions. We have achieved new results in an unknown case and remarkably improved other known results by utilizing the hidden convolution nature inside the operator. We also study the effects of the general product commutators on the main operator and the weighted estimates for a related maximal operator that norm-wise dominates the main operator.

\hfill

\end{abstract}

\section{Introduction}
In the 1990's, the bilinear fractional integral operator
$$\mathsf{BI}_\al(f,g)(x)= \int_{\R^n}\frac{f(x-y)\,g(x+y)}{|y|^{n-\al}}\,dy, \qquad 0<\al<n$$
was introduced by Kenig, Stein \cite{KenStein1999} and Grafakos \cite{Grafakos1992} as an operator that has close relations to the bilinear Hilbert transform
$$\mathsf{BH}(f,g)(x)=p.v.\int_{\mathbb{R}}\frac{f(x-y)\thinspace g(x+y)}{y}\thinspace dy.$$
They proved that for any pair of exponents $p_1,p_2\in(1,\infty)$, the operator $\mathsf{BI}_\al$ would map the product space $L^{p_1}(\mathbb{R}^n)\times L^{p_2}(\mathbb{R}^n)$ into $L^q(\mathbb{R}^n)$ where $q$ is computed via the equation $\frac1q=\frac{1}{p_1}+\frac{1}{p_2}-\frac{\al}{n}$. Those spaces are known as unweighted spaces. In reality, everything has different impact and importance, so weighted spaces such as $L^p(w)$ were then naturally considered, where $w$ is an assigned weight for the space. Hereafter, by a weight we mean a non-negative and locally integrable function. We shall discuss the conditions for the mapping
$$
\mathsf{BI}_{\alpha}: L^{p_1}(v_1)\times L^{p_2}(v_2)\longrightarrow L^{q}(u)
$$
to hold true. Such mapping is also referred to as the weighted norm estimate or the weighted boundedness of $\mathsf{BI}_\al$. When $u^\frac{1}{q}=v_1^\frac{1}{p_1}v_2^\frac{1}{p_2}$ and $\frac1q=\frac{1}{p_1}+\frac{1}{p_2}-\frac{\al}{n}$, the estimate is said to be a 1-vector-weight estimate, otherwise it is called a 2-vector-weight estimate. 

Weighted estimates for $\mathsf{BI}_\al$ were pretty much unknown until 2014 when Moen \cite{Kabe2014} published some initial results for the case when $p\coloneq\frac{p_1p_2}{p_1+p_2}\leqslant q\leqslant 1$. In that paper, a dyadic version of $\mathsf{BI}_\al$, namely
$$\mathsf{BI}_\al^\D(f,g)(x)=\sum_{Q\in\D}|Q|^{\frac{\al}{n}-1}\int_{|y|_\infty\leqslant\ell(Q)}f(x-y)\,g(x+y)\,dy\,\1_Q(x),$$
was introduced and proved to be point-wise equivalent to $\mathsf{BI}_\al$ when $f,g\geqslant0$. Here, $\D$ is a dyadic grid in $\R^n$ whose precise definition is given in Section \ref{pre}. In 2017, Hoang and Moen \cite{KC2017} generalized these results and extended them to the case when $1<p\leqslant q$. In 2019, Komori-Furuya \cite{Komori2019} proved a necessary and sufficient condition for the weighted boundedness of $\mathsf{BI}_\al$ in 1-vector-weight settings, and the weights were limited to power weights of the form $|x|^\gamma$. For general weights, the full picture is still wide open. The study of $\mathsf{BI}_\al$ has also drawn the interest of other mathematicians: He and Yan \cite{HY2020}, Ghosh and Singh \cite{GS2023}, et al.

\begin{figure}[h]
    \begin{tikzpicture}
    \draw[black, thin,->] (-5mm,0mm) -- (55mm,0mm) node [black,thick,anchor=west] {$p$};
    \draw[black, thin,->] (0mm,-5mm) -- (0mm,55mm) node [black,thick,anchor=east] {$q$};
    \draw[black, thin,dashed] (10mm,55mm) -- (10mm,0mm) node [black,thick,anchor=north] {$\frac{1}{2}$};
    \draw[black, thin,dashed] (0mm,0mm) -- (55mm,55mm);
    \draw[black, thin,dashed] (20mm,55mm) -- (20mm,0mm) node [black,thick,anchor=north] {$1$};
    \draw[black, thin,dashed] (20mm,20mm) -- (0mm,20mm) node [black,thick,anchor=east] {$1$};
    \draw[black,thick, dashed] (10mm,10mm) -- (20mm,20mm) -- (10mm,20mm) -- (10mm,10mm);
    \filldraw[gray] (10mm,10mm) -- (20mm,20mm) -- (10mm,20mm) -- (10mm,10mm);
    \draw[black,thick, dashed] (55mm,55mm) -- (20mm,20mm) -- (20mm,55mm);
    \filldraw[gray] (20mm,20mm) -- (55mm,55mm) -- (20mm,55mm) -- (20mm,20mm);
    \draw[black,thick,dashed] (10mm,55mm) -- (10mm,20mm) -- (20mm,20mm) -- (20mm,55mm);
    \draw[black, thin,dashed] (15mm,37mm) -- (15mm,37mm) node [black,rotate=90] {$p<1<q$ ?};
    \filldraw[gray!50, opacity = 0.5] (10mm,55mm) -- (10mm,20mm) -- (20mm,20mm) -- (20mm,55mm);
    \end{tikzpicture}
    \caption{The missing case for general weights.}
\end{figure}

In this paper, we give a sufficient condition for the boundedness of $\mathsf{BI}_\al$ in 2-vector-weight settings when $p<1<q$, as stated in Theorem \ref{p1q}. We also introduce better conditions that allow many more possible weights in the other two cases $p\leqslant q\leqslant1$ and $1\leqslant p\leqslant q$, as in Theorems \ref{1pq} and \ref{pq1}. Besides, we also obtain a Maximal Control Theorem when $q\leqslant1$; see Theorem \ref{MaxControl}. The key that leads to all of these achievements lies in the hidden convolution nature of the operator. The idea is made precise as follows: for $f,g\geqslant0$, we have
\begin{align*}
    \mathsf{BI}_\al(f,g)(x) & \simeq \mathsf{BI}_\al^\D(f,g)(x) \\
    & = \sum_{Q\in\D}|Q|^{\frac{\al}{n}-1}\int_{\R^n}f(2x-z)\,g(z)\,\1_{\left[-\ell(Q),\ell(Q)\right]^n}(z-x)\,dz\,\1_Q(x).
\end{align*}
Let $3Q$ denote the cube whose center is the center of $Q$ and side-length is three times as long, then we have (see Figure \ref{ChangeVar} to visually understand the next estimates)
\begin{align*}
    \mathsf{BI}_\al(f,g)(x) & \lesssim \sum_{Q\in\D}|Q|^{\frac{\al}{n}-1}\int_{\R^n}f(2x-z)\,g(z)\,\1_{3Q}(2x-z)\,\1_{3Q}(z)\,dz\,\1_Q(x),
\end{align*}
and hence
\begin{align} \label{ConvolutionDomination}
    \mathsf{BI}_\al(f,g)(x) & \lesssim \sum_{Q\in\D}|Q|^{\frac{\al}{n}-1}\left[(f\1_{3Q})*(g\1_{3Q})\right](2x)\,\1_Q(x).
\end{align}

\begin{figure}[h]
    \begin{tikzpicture}[scale=0.7]
    \draw[black, thin,->] (-14mm,24mm) -- (38mm,24mm) node [black,thick,anchor=west] {$x$};
    \draw[black, thin,->] (-10mm,-4mm) -- (-10mm,60mm) node [black,thick,anchor=east] {$y$};
    \filldraw[gray, thick] (14mm,8mm) -- (30mm,8mm) -- (30mm,40mm) -- (14mm,40mm) -- (14mm,8mm);
    \draw[white, thick,<->] (14mm,24mm) -- (30mm,24mm);
    \draw[white, thick] (22mm,15mm) -- (22mm,15mm) node [white,thick,anchor=south] {$Q$};
    \draw[black, thin,dashed] (14mm,8mm) -- (-10mm,8mm) node [black,thick,anchor=east] {$-\ell(Q)$};
    \draw[black, thin,dashed] (14mm,40mm) -- (-10mm,40mm) node [black,thick,anchor=east] {$-\ell(Q)$};

    \draw[black, thick,->] (44mm,32mm) -- (72mm,32mm);
    \draw[black, thin] (58mm,38mm) -- (58mm,38mm) node [black] {$\substack{u\,=\,x-y\,=\,2x-z \\ v\,=\,x+y\,=\,z}$};

    \draw[black, thin,->] (76mm,0mm) -- (140mm,0mm) node [black,thick,anchor=west] {$u$};
    \draw[black, thin,->] (80mm,-4mm) -- (80mm,60mm) node [black,thick,anchor=east] {$v$};
    \filldraw[gray, thick] (88mm,40mm) -- (120mm,8mm) -- (136mm,24mm) -- (104mm,56mm) -- (88mm,40mm);
    \draw[black,thin] (136mm,56mm) -- (136mm,8mm) -- (88mm,8mm) -- (88mm,56mm) -- (136mm,56mm) node [black,thick,anchor=south] {$3Q\times3Q$};
    \end{tikzpicture}
    \caption{A visualization for: $\1_{\left[-\ell(Q),\ell(Q)\right]^n}(z-x)\leqslant\1_{3Q}(2x-z)\times\1_{3Q}(z)$.}\label{ChangeVar}
\end{figure}

\hfill

As we develop the new conditions for the boundedness of $\mathsf{BI}_\al$, we realize that the techniques we use can be readily adapted to handle the effects of the commutators on $\mathsf{BI}_\al$. This is interesting because the commutators would often increase the singularity of the operators. Given a linear operator $T$ and a function $b$, the commutator $[b,T]$ is defined to be
$$
[b,T]f=b \thinspace T(f)-T(bf).
$$
The commutators were introduced by Coifman, Rochberg and Weiss \cite{KRW1976} while studying the classical factorization theory of $H^p$ spaces. Commutators for bilinear operators are a bit more complicated to define, so we shall deter their definitions until the later sections when they will be investigated. Commutators of both linear and bilinear fractional integral operators are interesting topics for many mathematicians since then: Segovia and Torrea \cite{ST1991} Duong and Yan \cite{DY2004}, Chen and Wu \cite{ChW2013}, Cao and Xue \cite{CX2019}, Lu and Tao \cite{LT2023}, et al.

\hfill

The rest of the paper goes as follows:
\begin{itemize}
    \item Section \ref{pre} is dedicated to providing sufficient background on various tools and known knowledge that we shall need for our work.
    \item Section \ref{main} presents our main results for the weighted boundedness of $\mathsf{BI}_\al$ with detailed discussions on how they improve previous known results. We also discuss an immediate application of our results at the end of the section.
    \item Section \ref{ProofMain} shows the proof of our main results.
    \item Section \ref{Comm} investigates the effects of the commutators on the bilinear fractional integral operators and their weighted estimates.
    \item Section \ref{Max} studies the weighted boundedness of a related maximal operator and discusses a Maximal Control Theorem.
\end{itemize}

\hfill

\section{Preliminaries}\label{pre}

One of the most important and innovative ideas in Analysis is the theory of dyadic grids and cubes. A dyadic grid $\mathscr{D}$ is a countable collection of cubes that satisfies the following properties:

\begin{enumerate}[\hspace{5mm}i)]
    \item For any cube $Q$ in $\mathscr{D}$, its length $\ell(Q)=2^k$ for some $k\in\mathbb{Z}$.
    \item For each $k\in\mathbb{Z}$, the set $\{Q\in\mathscr{D}:\thinspace \ell(Q)=2^k\}$ forms a partition of $\mathbb{R}^n$.
    \item For any two cubes $Q,P$ in $\mathscr{D}$, we have $Q\cap P=\emptyset$ or $P$ or $Q$.
\end{enumerate}

A common technical issue encountered when using the dyadic grid $\mathscr{D}$ is that not every cube in $\R^n$ can be contained in a dyadic cube from $\mathscr{D}$. To overcome this, the shifted grids of $\mathscr{D}$ were introduced:
$$\mathscr{D}_t=\left\{2^{-k}\left([0,1)^n+m+(-1)^kt\right):\thinspace k\in\mathbb{Z},m\in \mathbb{Z}^n\right\}, \quad t\in\{0,1/3\}^n.$$
This idea is made precise by the following theorem in \cite{Lerner2013}.

\begin{theorem}[also known as the $\frac{1}{3}$-trick] \label{1/3trick}
Given any cube $Q$ in $\mathbb{R}^n$, there exists a $t\in \{0,1/3\}^n$ and a cube $P\in\mathscr{D}_t$ such that $Q\subset P$ and $\ell(P)\leqslant 6\thinspace \ell(Q)$.
\end{theorem}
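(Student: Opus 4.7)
The plan is to proceed by first picking the scale and then, coordinate by coordinate, choosing the shift parameter $t\in\{0,1/3\}^n$ so that the resulting dyadic cube at that scale contains $Q$.

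First I would choose an integer $k$ so that $L:=2^{-k}$ satisfies $3\ell(Q)\leqslant L<6\ell(Q)$; such a $k$ exists because the powers of two differ by a factor of exactly two, while our window $[3\ell(Q),6\ell(Q))$ has ratio two. This already guarantees any candidate cube $P\in\mathscr{D}^t$ of side $L$ would satisfy $\ell(P)<6\ell(Q)$. The remaining task is to exhibit such a $P$ containing $Q$.

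Next I would reduce to one dimension by working on each coordinate axis separately. Fix a coordinate and let $I=[a,a+\ell(Q)]$ be the projection of $Q$. Consider the standard partition of $\R$ at scale $L$ into intervals $[mL,(m+1)L)$. Either $I$ lies inside one such interval, in which case I pick $t_i=0$; or $I$ straddles an endpoint $mL$, and since $\ell(Q)\leqslant L/3$ we have $I\subset(mL-L/3,\,mL+L/3)$. In the latter case I would pick $t_i=1/3$ and verify that, regardless of the parity of $k$, one of the shifted intervals $L[m'+(-1)^k/3,\,m'+1+(-1)^k/3)$ contains $(mL-L/3,\,mL+L/3)$: with the appropriate choice of $m'$ the shifted interval becomes either $[mL-L/3,\,mL+2L/3)$ (if $(-1)^k=-1$) or $[mL-2L/3,\,mL+L/3)$ (if $(-1)^k=1$), each of which suffices. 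Since the coordinate choices of $t_i$ are independent in the definition of $\mathscr{D}^t$, assembling them into $t=(t_1,\dots,t_n)\in\{0,1/3\}^n$ produces a cube $P\in\mathscr{D}^t$ at scale $L$ with $Q\subset P$.

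There is essentially no serious obstacle here: the only delicate point is bookkeeping the sign $(-1)^k$ in the definition of $\mathscr{D}^t$, which is what forces us to check both parities in the straddling case. Once this verification is done in one dimension, the passage to $n$ dimensions is automatic because, within a single fixed scale $L=2^{-k}$, the shift vector $t$ can be chosen componentwise.
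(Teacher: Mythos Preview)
The paper does not actually prove Theorem~\ref{1/3trick}; it simply cites \cite{Lerner2013} for it. Your argument is the standard direct proof and is correct. One tiny imprecision: in the straddling case you assert $I\subset(mL-L/3,\,mL+L/3)$, but in fact one only gets $I\subset[mL-L/3,\,mL+L/3)$ (equality on the left can occur when $\ell(Q)=L/3$); this does not affect the conclusion, since both shifted intervals you exhibit still contain $[mL-L/3,\,mL+L/3)$.
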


For the purpose of simplicity, we shall re-index the $t$ in the above theorem as $t\in\{1,...,2^n\}$. Another important and very useful concept is: the sparse family of cubes. A family of cubes $\mathscr{S}$ is said to be sparse if for any cube $Q\in\mathscr{S}$, there exists a set $E_Q\subset Q$ such that the family $\{E_Q\}_{Q\in\mathscr{S}}$ is pairwise disjoint and $|Q|\leqslant2|E_Q|$.

\hfill

A function $\Phi:[0,\infty)\to[0,\infty)$ is called a Young function if it is convex, continuous, strictly increasing, $\Phi(0)=0$ and $\frac{\Phi(t)}{t}\rightarrow\infty$ as $t\rightarrow\infty$. For every Young function $\Phi$, there exists an associate Young function $\overline{\Phi}$ such that $\Phi^{-1}(t)\,\overline{\Phi}^{-1}(t)\approx t$. Interested readers may find more information about Young functions from \cite{UMP2011}. Given a Young function $\Phi$, the Orlicz average of $f$ over a cube $Q$ is defined as
$$\|f\|_{\Phi,Q}=\inf\left\{\lambda>0:\thinspace\dashint_{Q}\Phi\left(\frac{|f(x)|}{\lambda}\right)dx\leqslant1\right\}$$
where $\dashint_Q=\frac{1}{|Q|}\int_Q$. Krasnosel'ski\u{i} and Ruticki\u{i} \cite{KR1961} proved that $\|f\|_{\Phi,Q}$ is equivalent to
$$
\|f\|'_{\Phi,Q}=\inf_{\lambda>0}\left\{\lambda+\frac{\lambda}{|Q|}\int_Q\Phi\left(\frac{|f(x)|}{\lambda}\right)dx\right\}.
$$
More precisely, we have
$$
\|f\|_{\Phi,Q}\leqslant\|f\|'_{\Phi,Q}\leqslant2\|f\|_{\Phi,Q}.
$$
When $\Phi(t)=t^p$ with $p>1$, we have
$$\|f\|_{\Phi,Q}=\|f\|_{L^p,Q}=\left(\dashint_Q|f(x)|^pdx\right)^\frac{1}{p}.$$
It has also been a common practice to write $\|f\|_{\Phi,Q}=\|f\|_{\exp{L},Q}$ when  $\Phi(t)=e^t-1$, and $\|f\|_{\Phi,Q}=\|f\|_{L^r(\log{L})^s,Q}$ when  $\Phi(t)=t^r\log(1+t)^s$.

\hfill

Let $\textsf{\textsf{BMO}}$ be the collection of functions of bounded mean oscillation; i.e., functions $b$ that satisfies
$$
\|b\|_{\textsf{BMO}}\coloneq\sup_Q\dashint_Q|b(x)-b_Q| \thinspace dx < \infty
$$
where $b_Q=\dashint_Qb(x) \thinspace dx$. As a consequence of the John-Nirenberg theorem, $\textsf{BMO}$ functions satisfy the exponential integrability as stated in the following theorem.

\begin{theorem} \label{ExpoInt}
Given $b\in \mathsf{BMO}$, there exists a constant $c_n$ such that
$$
\sup_Q \dashint_Q\exp \left(\frac{|b(x)-b_Q|}{2^{n+2}\|b\|_{\mathsf{BMO}}}\right)dx \leqslant c_n
$$
for all cube $Q$. In particular, $\|b-b_Q\|_{\exp L,Q}\leqslant c_n 2^{n+2} \|b\|_{\mathsf{BMO}}$.
\end{theorem}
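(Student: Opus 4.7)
The plan is to reduce the claim to the John--Nirenberg distributional estimate, followed by a layer-cake integration and a convexity argument for the Luxemburg norm. The first step is to establish that there exist dimensional constants $c_1, c_2 > 0$, with $c_2 > 2^{-(n+2)}$, such that for every cube $Q$ and every $\lambda > 0$,
$$|\{x \in Q : |b(x) - b_Q| > \lambda\}| \leq c_1 \exp\!\left(-\tfrac{c_2 \lambda}{\|b\|_{\mathsf{BMO}}}\right)|Q|.$$
I would prove this via a Calder\'on--Zygmund stopping-time argument: decompose $b - b_Q$ on $Q$ at height $\alpha = 2\|b\|_{\mathsf{BMO}}$ to obtain disjoint dyadic subcubes $\{Q_j^{(1)}\}$ with $\sum_j |Q_j^{(1)}| \leq |Q|/2$, with $|b_{Q_j^{(1)}} - b_Q| \leq 2^n \alpha$, and with $|b - b_Q| \leq \alpha$ a.e.\ outside the union. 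Iterating the construction inside each $Q_j^{(1)}$ in the standard fashion, the $m$-th generation cubes have total measure at most $2^{-m}|Q|$ and $|b - b_Q| \leq (m+1) 2^n \alpha$ on the complement. Solving for $\lambda$ in terms of $m$ yields $c_2 = (\ln 2)/2^{n+1}$, which comfortably exceeds $2^{-(n+2)}$ since $\ln 2 > 1/2$.

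Once the distributional estimate is in hand, I would expand the average via layer cake: with $\kappa = 2^{n+2}\|b\|_{\mathsf{BMO}}$,
$$\dashint_Q e^{|b(x) - b_Q|/\kappa}\,dx = 1 + \frac{1}{\kappa|Q|}\int_0^\infty e^{t/\kappa}\,|\{x \in Q : |b(x) - b_Q| > t\}|\,dt.$$
Inserting the John--Nirenberg bound, the net exponent $\tfrac{1}{\kappa} - \tfrac{c_2}{\|b\|_{\mathsf{BMO}}} = \tfrac{2^{-(n+2)} - c_2}{\|b\|_{\mathsf{BMO}}}$ is strictly negative by the choice of $c_2$, so the integral converges to a purely dimensional constant, giving the first assertion with some $c_n$.

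For the Luxemburg norm bound I would invoke the convexity estimate $\Phi(t/c) \leq \Phi(t)/c$, valid for convex $\Phi$ with $\Phi(0)=0$ and $c \geq 1$, applied to $\Phi(t) = e^t - 1$ with $c = c_n$. This yields $\dashint_Q\bigl(e^{|b-b_Q|/(c_n \kappa)} - 1\bigr)\,dx \leq 1$ and hence $\|b - b_Q\|_{\exp L, Q} \leq c_n \kappa = c_n 2^{n+2}\|b\|_{\mathsf{BMO}}$, as required. The main technical obstacle is the constant tracking in the Calder\'on--Zygmund iteration: the factor $2^{n+2}$ in the statement is just a touch above the threshold produced by the standard proof, so one must verify carefully that the chosen stopping height yields a decay rate $c_2$ strictly larger than $2^{-(n+2)}$ in every dimension; the calculation above shows $\ln 2 > 1/2$ provides exactly enough room.
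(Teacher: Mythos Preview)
Your argument is correct and is precisely the classical route: John--Nirenberg via iterated Calder\'on--Zygmund stopping, layer-cake integration, and the convexity bound $\Phi(t/c)\leqslant\Phi(t)/c$ for the Luxemburg norm. Your constant tracking is accurate; with stopping height $\alpha=2\|b\|_{\mathsf{BMO}}$ one obtains decay rate $c_2=(\ln 2)/2^{n+1}>2^{-(n+2)}$, so the exponential integral converges to a dimensional constant, and since the resulting $c_n\geqslant 1$ the convexity step goes through.

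The paper does not supply its own proof of this statement; it simply cites Journ\'e's lecture notes \cite{Journe1983}. The argument there is the same John--Nirenberg iteration you have written out, so there is no methodological difference to report.
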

A proof of Theorem \ref{ExpoInt} can be found in \cite{Journe1983}.

\hfill

The Orlicz maximal function is defined to be
$$M_{\Phi}(f)(x)=\sup_{Q\ni x}\|f\|_{\Phi,Q}.$$

Given a Young function $\Phi$, we write $\Phi\in B_p$ if and only if there exists a real number $c>0$ such that
$$
\int_c^{\infty}\frac{\Phi(t)}{t^{p+1}}\thinspace dt < \infty.
$$

P\'erez \cite{Perez1995} gave a necessary and sufficient condition for the boundedness of these Orlicz maximal operators.

\begin{theorem}\label{Bp}
For any $p\in(1,\infty)$,
$$
\|M_{\Phi}f\|_{L^p(\mathbb{R}^n)}\leqslant C\thinspace \|f\|_{L^p(\mathbb{R}^n)}
$$
if and only if $\thinspace\Phi$ satisfies the $B_p$ condition.
\end{theorem}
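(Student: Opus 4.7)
The plan is to prove both implications separately. For sufficiency, I will reduce the level-set estimate of $M_\Phi$ to the classical Hardy-Littlewood maximal function $M$ and its weak $(1,1)$ bound, then integrate through the layer-cake representation; for necessity, I will test $M_\Phi$ on the indicator of a ball and convert the resulting integral into the $B_p$ integral via a change of variables.

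For sufficiency, the key identity is
$$\{x\in\R^n:M_\Phi f(x)>\lambda\}=\{x\in\R^n:M(\Phi(|f|/\lambda))(x)>1\},$$
which follows from the fact that $\|f\|_{\Phi,Q}>\lambda$ if and only if $\frac{1}{|Q|}\int_Q\Phi(|f|/\lambda)\,dx>1$, by the monotonicity of $\Phi$. Choose $c>0$ small enough so that $\Phi(c)\leqslant 1/2$ (which is possible since $\Phi(0)=0$), and split $\Phi(|f|/\lambda)=g_1+g_2$, where $g_1$ is supported on $\{|f|>c\lambda\}$ and $g_2\leqslant\Phi(c)\leqslant 1/2$ pointwise. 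Then $Mg_2\leqslant 1/2$, so the level set in question is contained in $\{Mg_1>1/2\}$, and the weak $(1,1)$ bound for $M$ yields
$$|\{M_\Phi f>\lambda\}|\leqslant C\int_{\{|f|>c\lambda\}}\Phi(|f|/\lambda)\,dx.$$
Inserting this into $\|M_\Phi f\|_p^p=p\int_0^\infty\lambda^{p-1}|\{M_\Phi f>\lambda\}|\,d\lambda$, applying Fubini, and substituting $u=|f(x)|/\lambda$ in the inner integral produces
$$\|M_\Phi f\|_p^p\leqslant Cp\int_{\R^n}|f(x)|^p\,dx\int_c^\infty\frac{\Phi(u)}{u^{p+1}}\,du,$$
and the right-most integral is finite by the $B_p$ hypothesis.

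For necessity, test against $f=\1_B$ where $B$ is a ball of radius $r$. A direct computation gives $\|\1_B\|_{\Phi,Q}=1/\Phi^{-1}(|Q|/|B|)$ for any cube $Q\supset B$. For $|x|\gg r$, selecting a cube $Q_x\ni x$ of sidelength comparable to $|x|$ that also contains $B$ produces $M_\Phi\1_B(x)\gtrsim 1/\Phi^{-1}\bigl((|x|/r)^n\bigr)$; integrating in polar coordinates and substituting $t=(|x|/r)^n$ yields
$$\|M_\Phi\1_B\|_p^p\gtrsim r^n\int_1^\infty\Phi^{-1}(t)^{-p}\,dt,$$
while the assumed $L^p$-bound forces the left side to be $\lesssim r^n$, so this integral must be finite. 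Finally, the change of variables $u=\Phi^{-1}(t)$ transforms the integral into $\int_a^\infty u^{-p}\Phi'(u)\,du$, and integration by parts, after verifying $u^{-p}\Phi(u)\to 0$ (which follows from the monotonicity of $\Phi$ together with finiteness of the integral), delivers the $B_p$ condition.

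The most delicate step is the clean reduction to $M$ in the sufficient direction, which depends crucially on the right truncation level $c$; all the calculations afterward are driven by the $B_p$ hypothesis. In the necessary direction, the technical obstacle is carefully handling the boundary term in the integration by parts that links the two equivalent forms $\int\Phi^{-1}(t)^{-p}\,dt$ and $\int\Phi(u)/u^{p+1}\,du$ of the $B_p$ condition.
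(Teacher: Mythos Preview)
The paper does not supply its own proof of this statement; it is quoted as a known result of P\'erez and simply cited from \cite{Perez1995}. Your argument is correct and is essentially the standard proof found in the literature (for instance in P\'erez's original paper and in the Cruz-Uribe--Martell--P\'erez monograph \cite{UMP2011}): the sufficiency via the level-set identity $\{M_\Phi f>\lambda\}=\{M(\Phi(|f|/\lambda))>1\}$, truncation, the weak $(1,1)$ bound for $M$, and layer-cake plus Fubini is exactly the classical route, and the necessity via testing on $\1_B$ is the usual counterpart.

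One small streamlining in the necessity direction: you can bypass the boundary-term discussion in the integration by parts entirely. Since $\Phi$ is convex with $\Phi(0)=0$, the supporting-line inequality gives $\Phi(u)\leqslant u\,\Phi'(u)$ (right derivative) for all $u>0$, and hence
\[
\int_c^\infty \frac{\Phi(u)}{u^{p+1}}\,du \;\leqslant\; \int_c^\infty \frac{\Phi'(u)}{u^{p}}\,du \;=\; \int_{\Phi(c)}^\infty \Phi^{-1}(t)^{-p}\,dt \;<\;\infty,
\]
the last integral being precisely what your test-function computation shows to be finite. This avoids having to argue separately that $u^{-p}\Phi(u)\to 0$.
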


Given a Young function $\Phi$, we write $\Phi\in B_{p,q}$ if and only if there exists a real number $c>0$ such that
$$
\int_c^{\infty}\frac{\Phi(t)^\frac{q}{p}}{t^{q+1}}\thinspace dt < \infty.
$$
It was shown in \cite{UK2013} that $B_p\subsetneq B_{p,q}$ for $1<p<q$.

\hfill

For each $0<\al<n$ and a Young function $\Phi$, the fractional Orlicz maximal function is defined by
$$M_{\al,\Phi}(f)(x)=\sup_{Q\ni x}|Q|^\frac{\al}{n}\|f\|_{\Phi,Q}.$$
Cruz-Uribe and Moen \cite{UK2013} proved the following theorem.

\begin{theorem}\label{Bpq}
Suppose $0<\al<n$. For any $p\in(1,\frac{n}{\al})$, let $q$ be such that $\frac{\al}{n}=\frac{1}{p}-\frac{1}{q}$, then we have
$$
\|M_{\al,\Phi}f\|_{L^q(\mathbb{R}^n)}\leqslant C\thinspace \|f\|_{L^p(\mathbb{R}^n)}
$$
if and only if $\thinspace\Phi$ satisfies the $B_{p,q}$ condition.
\end{theorem}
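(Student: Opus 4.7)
The plan is to prove both directions of the equivalence, paralleling P\'erez's proof of Theorem \ref{Bp}.

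For sufficiency, assume $\Phi\in B_{p,q}$. First I reduce to the dyadic version $M^{\mathscr{D}}_{\al,\Phi}$ via the $\tfrac{1}{3}$-trick (Theorem \ref{1/3trick}); the convexity of $\Phi$ ensures that the Orlicz average only inflates by a bounded factor when the underlying cube is enlarged by a bounded factor, so the dyadic bound is equivalent to the general one. Next I carry out a Calder\'on--Zygmund / level-set decomposition: for each $k\in\Z$, let $\Omega_k = \{x:M^{\mathscr{D}}_{\al,\Phi}f(x) > 2^k\}$ and let $\{Q^k_j\}_j$ be the collection of maximal dyadic cubes contained in $\Omega_k$; these are pairwise disjoint and satisfy $|Q^k_j|^{\al/n}\|f\|_{\Phi,Q^k_j} > 2^k$. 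Using the identity $1+q\al/n = q/p$ (which is just $\tfrac{1}{p}-\tfrac{1}{q}=\tfrac{\al}{n}$ rewritten), a dyadic layer cake gives
$$\|M^{\mathscr{D}}_{\al,\Phi}f\|_q^q \lesssim \sum_{k,j} 2^{kq}|Q^k_j| \leqslant \sum_{k,j}|Q^k_j|^{q/p}\|f\|_{\Phi,Q^k_j}^q.$$
I then extract a sparse subfamily from the tower $\{Q^k_j\}_{k,j}$ using the nesting $\Omega_{k+1}\subset\Omega_k$, apply the defining inequality $\|f\|_{\Phi,Q} > \mu \Rightarrow |Q| < \int_Q \Phi(|f|/\mu)\,dx$, and perform Fubini with the substitution $t = |f|/\mu$. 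After rearrangement the sum collapses into $\|f\|_p^q$ times $\int_c^\infty \Phi(t)^{q/p}/t^{q+1}\,dt$, which is finite by $B_{p,q}$.

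For necessity, I test the inequality against $f = t\,\mathbf{1}_B$ on balls $B$. For $Q\supset B$ one computes $\|f\|_{\Phi,Q} = t/\Phi^{-1}(|Q|/|B|)$, hence $|Q|^{\al/n}\|f\|_{\Phi,Q} = t|Q|^{\al/n}/\Phi^{-1}(|Q|/|B|)$; letting $t$ and $|Q|/|B|$ range over $(0,\infty)$, the assumed $L^p\to L^q$ boundedness produces, after the standard rescaling argument, exactly the integrability criterion $\int_c^\infty \Phi(t)^{q/p}/t^{q+1}\,dt<\infty$.

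The main obstacle is that the fractional factor $|Q|^{\al/n}$ couples the cube scale with the level $\lambda$, so unlike in Theorem \ref{Bp} the $\lambda$-integration does not decouple immediately from the geometric sum over cubes; the key technical device is to group the dyadic cubes via a principal / sparse subfamily \emph{before} performing the change of variables, which is precisely what lets the exponent $q/p$ (as opposed to $p$) emerge in the final $B_{p,q}$ integral. The dyadic reduction itself also requires convexity rather than linearity of the Orlicz average under cube enlargement, which has to be handled carefully at the outset.
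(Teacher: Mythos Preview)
The paper does not contain a proof of this statement: Theorem~\ref{Bpq} is stated in Section~\ref{pre} as a preliminary result, with the sentence ``Cruz-Uribe and Moen \cite{UK2013} proved the following theorem'' immediately preceding it. There is therefore no in-paper proof to compare your proposal against.

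For what it is worth, your outline is broadly in the spirit of the original Cruz-Uribe--Moen argument: dyadic reduction, a level-set decomposition with maximal cubes, and then exploiting the identity $1+q\alpha/n=q/p$ to make the $B_{p,q}$ integral appear. A couple of places in your sketch would need more care if you were actually writing this up. In the sufficiency direction, the passage from $\sum_{k,j}|Q^k_j|^{q/p}\|f\|_{\Phi,Q^k_j}^q$ to something controlled by $\|f\|_p^q\int_c^\infty \Phi(t)^{q/p}t^{-q-1}\,dt$ is the whole point of the proof and is not as mechanical as ``Fubini with the substitution $t=|f|/\mu$'' suggests; one really has to use either a sparse/Carleson-type reduction or an explicit stopping-time argument to collapse the double sum, and the exponent $q/p>1$ on $|Q^k_j|$ is what makes this nontrivial compared to the $\alpha=0$ case. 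In the necessity direction, testing on $f=t\mathbf{1}_B$ gives pointwise lower bounds on $M_{\alpha,\Phi}f$, but turning this into the integral condition $\int_c^\infty \Phi(t)^{q/p}t^{-q-1}\,dt<\infty$ requires a careful choice of the test scales and a genuine computation, not just ``letting $t$ and $|Q|/|B|$ range over $(0,\infty)$''; as written that step is a claim rather than an argument.
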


There is also a generalized H\"older inequality for these Orlicz averages.

\begin{lemma}
If $\Phi,\Psi,\Theta$ are Young functions such that
$$
\Phi^{-1}(t)\,\Psi^{-1}(t)\lesssim \Theta^{-1}(t), \hspace{2mm} \forall t\geqslant t_0\geqslant0
$$
then
$$
\|fg\|_{\Theta,Q}\lesssim \|f\|_{\Phi,Q} \, \|g\|_{\Psi,Q}.
$$
In particular, for any Young function $\Phi$,
$$
\dashint_{Q}{|f(x)\thinspace g(x)|\thinspace dx}\leqslant2\thinspace\|f\|_{\Phi,Q}\thinspace\|g\|_{\overline{\Phi},Q}.
$$
\end{lemma}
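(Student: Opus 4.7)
The plan is to reduce the general inequality to a pointwise estimate and then integrate. By homogeneity of the Luxemburg functional, I may assume $\|f\|_{\Phi,Q}=\|g\|_{\Psi,Q}=1$, so that $\dashint_Q\Phi(|f|)\,dx\le 1$ and $\dashint_Q\Psi(|g|)\,dx\le 1$; it then suffices to show $\|fg\|_{\Theta,Q}\lesssim 1$. Using the identity $|f(x)|=\Phi^{-1}(\Phi(|f(x)|))$ and the monotonicity of $\Phi^{-1}$ and $\Psi^{-1}$, set $A(x)\coloneq\Phi(|f(x)|)+\Psi(|g(x)|)$ and observe
$$|f(x)g(x)|=\Phi^{-1}(\Phi(|f(x)|))\,\Psi^{-1}(\Psi(|g(x)|))\le \Phi^{-1}(A(x))\,\Psi^{-1}(A(x)).$$
On $\{A(x)\ge t_0\}$, the hypothesis yields a constant $C_0$ with $\Phi^{-1}(A(x))\Psi^{-1}(A(x))\le C_0\,\Theta^{-1}(A(x))$; on $\{A(x)<t_0\}$ the inverses are uniformly bounded by continuity, so the same estimate persists after enlarging $C_0$.

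Applying $\Theta$ pointwise gives $\Theta(|f(x)g(x)|/C_0)\le A(x)$, and integrating over $Q$ then produces
$$\dashint_Q \Theta(|fg|/C_0)\,dx \le \dashint_Q \Phi(|f|)\,dx + \dashint_Q \Psi(|g|)\,dx \le 2.$$
Since $\Theta$ is convex with $\Theta(0)=0$, the elementary inequality $\Theta(t/2)\le\Theta(t)/2$ lets me replace $C_0$ by $2C_0$ and conclude $\dashint_Q\Theta(|fg|/(2C_0))\,dx\le 1$, which is exactly $\|fg\|_{\Theta,Q}\le 2C_0$. Undoing the normalization yields the desired bound.

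For the particular case $\Theta(t)=t$, $\Psi=\overline{\Phi}$, since $\Theta$ here fails the superlinear growth condition required of a genuine Young function, I would argue directly via Young's inequality for complementary Young functions, $ab\le\Phi(a)+\overline{\Phi}(b)$, which is a standard consequence of $\Phi^{-1}(t)\overline{\Phi}^{-1}(t)\approx t$. Applying it to $|f|/\|f\|_{\Phi,Q}$ and $|g|/\|g\|_{\overline{\Phi},Q}$ and integrating, each of the two summands contributes at most $1$, yielding the factor $2$. The only mildly delicate step in either argument is bridging the regime $t<t_0$ in the general case; everything else is routine bookkeeping around the definition of the Luxemburg functional.
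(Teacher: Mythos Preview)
The paper does not prove this lemma; it is stated in the Preliminaries as a standard fact (the generalized H\"older inequality for Orlicz averages) and used without proof throughout. Your argument is the standard one, and it is correct in outline.

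One point deserves tightening. On the set $\{A(x)<t_0\}$ you claim that ``the same estimate persists after enlarging $C_0$,'' but the pointwise bound $|fg|\le C_0\,\Theta^{-1}(A(x))$ can genuinely fail there: $\Theta^{-1}(A(x))\to 0$ as $A(x)\to 0$, while the ratio $\Phi^{-1}(A(x))\Psi^{-1}(A(x))/\Theta^{-1}(A(x))$ is not controlled by the hypothesis for small $A(x)$. The clean fix is not to insist on the pointwise inequality but to integrate directly: on that set one has $|f(x)g(x)|\le \Phi^{-1}(t_0)\Psi^{-1}(t_0)$; call this constant $M$. Then $\Theta(|fg|/C_0)\le \Theta(M/C_0)$ there, whence
\[
\dashint_Q \Theta\!\left(\frac{|fg|}{C_0}\right)dx \;\le\; \dashint_Q A(x)\,dx \;+\; \Theta\!\left(\frac{M}{C_0}\right) \;\le\; 2 + \Theta\!\left(\frac{M}{C_0}\right),
\]
a finite constant depending only on $\Phi,\Psi,\Theta,t_0,C_0$; convexity of $\Theta$ then lets you rescale to bring the right side down to $1$. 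You already flagged this as the delicate step, so this is a matter of writing it out rather than a missing idea. The particular case, argued directly via Young's inequality $ab\le \Phi(a)+\overline{\Phi}(b)$, is handled correctly.
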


\hfill

The Muckenhoupt class of weights $\mathsf{A}_\infty=\cup_{p>1}\mathsf{A}_p$ where $\mathsf{A}_p$ is the collection of weights $w$ that satisfy
$$
\sup_Q\left(\dashint_Qw\right)\left(\dashint_Qw^{1-p'}\right)^{p-1}<\infty.
$$
When $p=1$, one says $w\in \mathsf{A}_1$ if $w$ satisfies $Mw(x)\leqslant C\thinspace w(x)$ for almost very $x\in \mathbb{R}^n$. From \cite{Duoan2000} we know the following facts.

\begin{lemma} \label{AinfinityProperty}
If $w\in \mathsf{A}_\infty$ then the followings hold:

i) for every $\eta\in(0,1)$, there exists $\kappa\in(0,1)$ such that: given a cube $Q$ and

\hfill $S\subseteq Q$ with $|S|\leqslant\eta\thinspace |Q|$, we will also have $w(S)\leqslant\kappa\thinspace w(Q)$;

ii)  there exist $\delta_0>1$ such that
$$
    \left(\dashint_Qw^{1+\delta}\right)^{\frac{1}{1+\delta}} \leqslant C \thinspace \dashint_Qw \qquad \text{for all} \quad 0<\delta\leqslant\delta_0.
$$
\end{lemma}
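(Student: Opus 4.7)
Both assertions are classical features of the Muckenhoupt $A_\infty$ class, and the natural plan is to derive them from $A_p$ membership, which the definition $A_\infty = \bigcup_{p>1}\mathsf{A}_p$ immediately supplies. Fix $w \in \mathsf{A}_\infty$ and pick $p > 1$ with $w \in \mathsf{A}_p$.

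For part (i), the idea is Hölder's inequality with exponents $p$ and $p'$ applied to the trivial factorization $\1_{Q\setminus S} = \1_{Q\setminus S}\cdot w^{1/p}\cdot w^{-1/p}$ averaged on $Q$. This yields
$$
\left(\frac{|Q \setminus S|}{|Q|}\right)^p \leqslant \frac{w(Q\setminus S)}{|Q|}\left(\dashint_Q w^{1-p'}\right)^{p-1},
$$
and bounding the second factor by the $\mathsf{A}_p$ condition produces $w(Q\setminus S) \geqslant [w]_{\mathsf{A}_p}^{-1}(1-\eta)^p\, w(Q)$ whenever $|S|\leqslant\eta|Q|$. Taking complements gives $w(S) \leqslant \bigl(1-[w]_{\mathsf{A}_p}^{-1}(1-\eta)^p\bigr)\,w(Q)$, so $\kappa := 1-[w]_{\mathsf{A}_p}^{-1}(1-\eta)^p \in (0,1)$ is the required constant.

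For part (ii), I would run the classical Calderón--Zygmund good-$\lambda$ argument that promotes $\mathsf{A}_\infty$ to a reverse Hölder estimate. Fix a cube $Q$, set $\bar w = \dashint_Q w$, and consider the level sets $E_k := \{x \in Q : w(x) > 2^{kn}\bar w\}$. A dyadic stopping-time decomposition of $w$ on $Q$ at successive heights $2^{kn}\bar w$ gives $|E_{k+1}| \leqslant 2^{-n}|E_k|$, and part (i) upgrades this Lebesgue decay into $w$-measure decay: $w(E_{k+1}) \leqslant \kappa\, w(E_k)$ for some $\kappa \in (0,1)$ depending only on $[w]_{\mathsf{A}_\infty}$. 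Splitting the integral along the level sets and using $w \leqslant 2^{(k+1)n}\bar w$ on $E_k\setminus E_{k+1}$ yields
$$
\int_Q w^{1+\delta} \leqslant 2^{n\delta}\,\bar w^\delta\, w(Q)\sum_{k\geqslant0}(2^{n\delta}\kappa)^k,
$$
a geometric series that converges precisely when $\delta < -(\log_2\kappa)/n$. This selects an admissible $\delta_0 > 0$, and normalizing by $|Q|$ followed by taking $(1+\delta)$-th roots delivers the stated reverse Hölder inequality.

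The main obstacle is the bookkeeping in part (ii): one has to verify that the decay rate $\kappa$ from part (i) can be chosen uniformly over all cubes $Q$, which rests on the uniformity of $[w]_{\mathsf{A}_p}$, and then calibrate constants so that $2^{n\delta_0}\kappa < 1$ for the target $\delta_0$. Since both facts are thoroughly standard, I would defer the detailed constant-chasing to Duoandikoetxea \cite{Duoan2000}, which the lemma itself already cites.
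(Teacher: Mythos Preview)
Your sketch is correct and in fact goes well beyond what the paper does: the paper does not prove this lemma at all, but simply records it as a known fact with the citation ``From \cite{Duoan2000} we know the following facts.'' Your argument for (i) via H\"older and the $\mathsf{A}_p$ constant, and for (ii) via the Calder\'on--Zygmund stopping-time/good-$\lambda$ route to reverse H\"older, is exactly the standard proof found in that reference, so there is no discrepancy in approach---you have merely unpacked what the paper leaves as a black-box citation.
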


A bilinear version for $\mathsf{A}_p$ is the $\mathsf{A}_{[p_1,\,p_2],\,q}$ class for pairs of weights. A pair of weight $(w_1,w_2)$ is said to satisfy the $\mathsf{A}_{[p_1,\,p_2],\,q}$ condition if
$$\sup_Q \left(\dashint_Q w_1^qw_2^q\right)^{\frac{1}{q}}\left(\dashint_Q{w_1^{-p_1'}}\right)^{\frac{1}{p_1'}}\left(\dashint_Q{w_2^{-p_2'}}\right)^{\frac{1}{p_2'}} <\infty.$$
The following theorem was proved in \cite{CWX2014} and \cite{Kabe2009}.

\begin{theorem}\label{1wApqProperty}
If $1<p_1,p_2<\infty$, then $(w_1,w_2)\in \mathsf{A}_{[p_1,\,p_2],\,q}$ if and only if 
$$
(w_1w_2)^q\in \mathsf{A}_{2q} \qquad \text{and} \qquad w_i^{-p_i'}\in \mathsf{A}_{2p_i'}.
$$
\end{theorem}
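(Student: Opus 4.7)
My plan is to prove the equivalence by direct applications of Hölder's inequality at the level of a fixed cube. For a fixed $Q$, introduce the averages $A = \dashint_Q (w_1w_2)^q$, $B_i = \dashint_Q w_i^{-p_i'}$, $C_i = \dashint_Q w_i^{p_i'/(2p_i'-1)}$, and $D = \dashint_Q (w_1w_2)^{-q/(2q-1)}$. After unwinding the $\mathsf{A}_r$ definition (using $1-r' = -1/(r-1)$ at $r = 2q$ and $r = 2p_i'$), the three Muckenhoupt conditions translate to uniform-in-$Q$ bounds on $A^{1/q}B_1^{1/p_1'}B_2^{1/p_2'}$, on $A\,D^{2q-1}$, and on $B_i\,C_i^{2p_i'-1}$, respectively.

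For the forward direction, to derive $(w_1w_2)^q\in\mathsf{A}_{2q}$ I would split $w_1^{-q/(2q-1)}\cdot w_2^{-q/(2q-1)}$ inside $D$ and apply three-factor Hölder with exponents $s_i = p_i'(2q-1)/q$ (the third factor being a trivial unit absorbing the slack). A short check gives $1/s_1+1/s_2 = q(2-1/p)/(2q-1)\leq 1$ whenever $q\geq p\coloneq p_1p_2/(p_1+p_2)$, which is automatic in the fractional setting $0<\al<n$. This yields $D\leq B_1^{1/s_1}B_2^{1/s_2}$; raising to $(2q-1)$ and multiplying by $A$ collapses the product into $(A^{1/q}B_1^{1/p_1'}B_2^{1/p_2'})^q$, bounded by assumption. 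To obtain $w_1^{-p_1'}\in\mathsf{A}_{2p_1'}$ I would decompose $w_1^{p_1'/(2p_1'-1)} = (w_1w_2)^{p_1'/(2p_1'-1)}\cdot w_2^{-p_1'/(2p_1'-1)}$ inside $C_1$ and apply Hölder with exponents $\sigma_1 = q(2p_1'-1)/p_1'$, $\sigma_2 = p_2'(2p_1'-1)/p_1'$, which again satisfy $1/\sigma_1+1/\sigma_2\leq 1$ under $q\geq p$; collapsing yields $(A^{1/q}B_1^{1/p_1'}B_2^{1/p_2'})^{p_1'}$, and $i=2$ is symmetric.

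For the reverse direction the main idea is a single Hölder application to the tautology $1 = \dashint_Q (w_1w_2)^{-a}\,w_1^a\,w_2^a$, with three exponents $r_1,r_2,r_3$ tuned so that the resulting integrands match those inside $D$, $C_1$, $C_2$: $ar_1 = q/(2q-1)$, $ar_2 = p_1'/(2p_1'-1)$, $ar_3 = p_2'/(2p_2'-1)$. Imposing $1/r_1+1/r_2+1/r_3 = 1$ forces $a = (6 - 1/q - 1/p_1' - 1/p_2')^{-1}$, which is positive in the parameter range of interest. Hölder gives $1 \leq D^{1/r_1}C_1^{1/r_2}C_2^{1/r_3}$, and raising to $1/a$ yields $1\leq D^{2-1/q}C_1^{2-1/p_1'}C_2^{2-1/p_2'}$. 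Combining with the rearrangements $A^{1/q}\leq L^{1/q}D^{-(2-1/q)}$ and $B_i^{1/p_i'}\leq K_i^{1/p_i'}C_i^{-(2-1/p_i')}$ from the $\mathsf{A}_{2q}$ and $\mathsf{A}_{2p_i'}$ hypotheses cancels the $D$ and $C_i$ factors and leaves the desired uniform bound on $A^{1/q}B_1^{1/p_1'}B_2^{1/p_2'}$.

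The main obstacle is arithmetic bookkeeping: in each Hölder step one must verify that the reciprocal exponents are admissible ($\leq 1$ for the three-factor expansions in the forward direction, exactly $=1$ in the reverse one), and every obstruction ultimately reduces to $q\geq p_1p_2/(p_1+p_2)$, which is built into the fractional hypothesis $0<\al<n$ used throughout the paper. Notably, no appeal to the reverse Hölder property in Lemma \ref{AinfinityProperty} is required—pure Hölder suffices for the entire equivalence.
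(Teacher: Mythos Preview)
The paper does not prove this theorem; immediately before the statement it writes ``The following theorem was proved in \cite{CWX2014} and \cite{Kabe2009}'' and simply quotes the result. So there is no in-paper argument to compare against.

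Your argument is correct. The dictionary $A=\dashint_Q(w_1w_2)^q$, $B_i=\dashint_Q w_i^{-p_i'}$, $C_i=\dashint_Q w_i^{p_i'/(2p_i'-1)}$, $D=\dashint_Q(w_1w_2)^{-q/(2q-1)}$ is the right one, and each H\"older step checks out. In the forward direction, both admissibility constraints $1/s_1+1/s_2\le 1$ and $1/\sigma_1+1/\sigma_2\le 1$ reduce exactly to $q\ge p$, and after raising to the powers $2q-1$ (resp.\ $2p_1'-1$) the products collapse to $(A^{1/q}B_1^{1/p_1'}B_2^{1/p_2'})^q$ and $(A^{1/q}B_1^{1/p_1'}B_2^{1/p_2'})^{p_1'}$ as you claim. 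In the reverse direction, the three-factor H\"older on the pointwise identity $1=(w_1w_2)^{-a}w_1^aw_2^a$ with your choice of $a$ yields $1\le D^{2-1/q}C_1^{2-1/p_1'}C_2^{2-1/p_2'}$, which is precisely the reciprocal needed to cancel the $D,C_1,C_2$ factors coming from the $\mathsf A_{2q}$ and $\mathsf A_{2p_i'}$ hypotheses. Your closing remark that no reverse-H\"older self-improvement is needed is also accurate.

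One point to make explicit: your forward implication genuinely uses $q\ge p$. The theorem as displayed in the paper does not list this hypothesis, but in this paper (and in the cited sources) $q$ is always the target exponent of a fractional operator and $q\ge p$ is standing throughout, so this is not a gap in context. If you write this up independently, state $q\ge p$ as part of the hypotheses.
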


Another important class of weights is the Reverse H\"older class. For $s>1$, a weight $w$ is said to be in the Reverse H\"older class of order $s$, denoted as $\mathsf{RH}_s$, if there exists a constant $C$ such that
$$
\left(\dashint_Qw^s\right)^{\frac{1}{s}} \leqslant C \thinspace \dashint_Qw \qquad \text{for all cubes} \,\, Q.
$$
When $s=\infty$, $\mathsf{RH}_\infty$ denotes the collection of weights $w$ such that
$$
w(x)\leqslant C \thinspace \dashint_Qw \qquad \text{for all cubes} \,\, Q \text{ and almost every } x\in Q.
$$

In \cite{CN1995,StW1985}, the authors showed that there is an explicit connection between the Reverse H\"older class and the Muckenhoupt class of weights.
\begin{theorem}\label{RH}
    $w\in\mathsf{RH}_{s}$ if and only if $w^{s}\in\mathsf{A}_\infty$.
\end{theorem}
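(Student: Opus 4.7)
The plan is to prove each direction by invoking a classical characterization of $\mathsf{A}_\infty$: the reverse Jensen inequality for $(\Leftarrow)$, and Gehring's self-improvement of the reverse H\"older inequality for $(\Rightarrow)$.

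For $(\Leftarrow)$, assume $w^{s}\in\mathsf{A}_\infty$. I would invoke the reverse Jensen characterization: a weight $v$ belongs to $\mathsf{A}_\infty$ if and only if there exists $c>0$ such that $\dashint_Q v\leqslant c\exp\bigl(\dashint_Q \log v\bigr)$ for every cube $Q$. This is the formal $p\to\infty$ limit of the $\mathsf{A}_p$ condition and is standard (see \cite{CN1995,StW1985}). Applying it with $v=w^{s}$ yields
$$\dashint_Q w^{s} \;\leqslant\; c\exp\!\Bigl(s\dashint_Q \log w\Bigr) \;\leqslant\; c\Bigl(\dashint_Q w\Bigr)^{s},$$
where the last step is Jensen's inequality for the concave function $\log$. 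Taking $s$-th roots then delivers $w\in\mathsf{RH}_{s}$.

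For $(\Rightarrow)$, assume $w\in\mathsf{RH}_{s}$. I would invoke Gehring's self-improvement lemma: the reverse H\"older inequality is open on the right, so there is some $\epsilon>0$ with $w\in\mathsf{RH}_{s+\epsilon}$. Combined with the Jensen bound $\dashint_Q w\leqslant(\dashint_Q w^{s})^{1/s}$ (from the convexity of $t\mapsto t^{s}$), this produces
$$\Bigl(\dashint_Q w^{s+\epsilon}\Bigr)^{1/(s+\epsilon)} \;\leqslant\; C\dashint_Q w \;\leqslant\; C\Bigl(\dashint_Q w^{s}\Bigr)^{1/s}.$$
Raising to the power $s+\epsilon$ and writing $v:=w^{s}$, $r:=(s+\epsilon)/s>1$, this becomes $\dashint_Q v^{r}\leqslant C^{s+\epsilon}(\dashint_Q v)^{r}$, so $v$ itself satisfies a reverse H\"older inequality of some order $r>1$. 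A short H\"older bound $v(E)\leqslant\bigl(\int_E v^{r}\bigr)^{1/r}|E|^{1/r'}$ then gives $v(E)/v(Q)\leqslant C'(|E|/|Q|)^{1/r'}$ for every measurable $E\subset Q$, which is well-known to imply $v\in\mathsf{A}_\infty$ (the converse of Lemma \ref{AinfinityProperty}(i); cf.\ \cite{Duoan2000}).

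The main obstacle is not any individual computational step but the two classical black-box inputs — the reverse Jensen characterization of $\mathsf{A}_\infty$ and Gehring's self-improvement of the reverse H\"older inequality — both of which I would cite from \cite{CN1995,StW1985,Duoan2000} rather than reprove. Once these are granted, the algebra that splices them into the desired equivalence is elementary.
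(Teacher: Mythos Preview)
Your proof sketch is correct and follows standard arguments in weight theory. Note, however, that the paper does not actually prove Theorem~\ref{RH}: it is simply quoted from \cite{CN1995,StW1985} as a known result, with no argument given. There is thus no in-paper proof to compare against; your outline is essentially a reconstruction of the classical proof, invoking exactly the machinery (the reverse Jensen characterization of $\mathsf{A}_\infty$ and Gehring's self-improvement of reverse H\"older classes) that those references develop.
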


\hfill

\section{Main Results}\label{main}

For each vector exponent $\vec{p}\coloneqq(p_1,p_2,q)$, we define $\Lambda_{\vec{p}}$ as the set of all vector indices $\overrightarrow m\coloneqq(m_1,m_2)\in\mathbb{R}^2$ that satisfy the following conditions:
\begin{enumerate}[\hspace{5mm}i)]
    \item $1\leqslant m_i\leqslant p_i$ for $i=1,2$.
    \item There exists $1\leqslant m\leqslant\infty$ such that $\frac{m_1}{p_1}+\frac{m_2}{p_2}=1+\frac{1}{mq}$.
\end{enumerate}
Observe that $(p_1,p_2)\notin\Lambda_{\vec{p}}$ when $q>1$. From now on, we shall refer to $m$ as the solution for the equation $\frac{m_1}{p_1}+\frac{m_2}{p_2}=1+\frac{1}{mq}$.

\hfill

\begin{figure}[h]
    \begin{tikzpicture}
    \draw[black, thin,->] (-5mm,0mm) -- (90mm,0mm) node [black,thick,anchor=west] {$m_1$};
    \draw[black, thin,->] (0mm,-5mm) -- (0mm,67mm) node [black,thick,anchor=south] {$m_2$};
    \draw[black, thin,dotted] (10mm,67mm) -- (10mm,0mm) node [black,thick,anchor=north] {$1$};
    \draw[black, thin,dotted] (90mm,10mm) -- (0mm,10mm) node [black,thick,anchor=east] {$1$};
    \draw[black, thin,dotted] (40mm,67mm) -- (40mm,0mm) node [black,thick,anchor=north] {$p_1$};
    \draw[black, thin,dotted] (90mm,30mm) -- (0mm,30mm) node [black,thick,anchor=east] {$p_2$};
    \filldraw (10mm,10mm) circle (0.5mm) node[black,thin,anchor=north east]{\small $(1,1)$};
    \filldraw (40mm,30mm) circle (0.5mm) node[black,thin,anchor=south west]{\small $(p_1,p_2)$};
    \draw[black, thin,dotted] (0mm,17.5mm) -- (23.33mm,0mm);
    \draw[black, thin,dotted] (23.33mm,0mm) -- (23.33mm,0mm) node [black,thick,anchor=north] {$\frac{p_1}{p}$};
    \draw[black, thin,dotted] (0mm,17.5mm) -- (0mm,17.5mm) node [black,thick,anchor=east] {$\frac{p_2}{p}$};
    \draw[white, thin] (17mm,5mm) -- (17mm,5mm) node[black,anchor=west]{\small $(l_1)$};
    \draw[black, thin,dotted] (0mm,30mm) -- (40mm,0mm);
    \draw[black, thin,dotted] (40mm,0mm) -- (40mm,0mm);
    \draw[black, thin,dotted] (0mm,30mm) -- (0mm,30mm);
    \draw[white, thin] (19mm,15mm) -- (19mm,15mm) node[black,anchor=east]{\small $(l_2)$};
    \draw[black, thin,dotted] (0mm,51mm) -- (68mm,0mm);
    \draw[black, thin,dotted] (68mm,0mm) -- (68mm,0mm) node [black,thick,anchor=north] {$p_1\left(1+\frac1q\right)$};
    \draw[black, thin,dotted] (0mm,51mm) -- (0mm,51mm) node [black,thick,anchor=east] {$p_2\left(1+\frac1q\right)$};
    \draw[white, thin] (16mm,40mm) -- (16mm,40mm) node[black,anchor=west]{\small $(l_3)$};
    \draw[black, thin,dotted] (0mm,60mm) -- (80mm,0mm);
    \draw[black, thin,dotted] (80mm,0mm) -- (80mm,0mm) node [black,thick,anchor=north west] {$2p_1$};
    \draw[black, thin,dotted] (0mm,60mm) -- (0mm,60mm) node [black,thick,anchor=east] {$2p_2$};
    \draw[white, thin] (16mm,49mm) -- (16mm,49mm) node[black,anchor=west]{\small $(l_4)$};
    \draw[black, very thick] (10mm,22.5mm) -- (26.67mm,10mm) -- (40mm,10mm) -- (40mm,21mm) -- (28mm,30mm) -- (10mm,30mm) -- (10mm,22.5mm);
    \filldraw[gray, opacity = 0.5] (10mm,22.5mm) -- (26.67mm,10mm) -- (40mm,10mm) -- (40mm,21mm) -- (28mm,30mm) -- (10mm,30mm) -- (10mm,22.5mm);
    \draw[white, thin] (27mm,21mm) -- (27mm,21mm) node[black]{\huge $\Lambda_{\vec p}$};
    \draw[white, thin] (60mm,65mm) -- (60mm,65mm) node[black,anchor=west]{$(l_1):\frac{m_1}{p_1}+\frac{m_2}{p_2}=\frac1p$};
    \draw[white, thin] (60mm,59mm) -- (60mm,59mm) node[black,anchor=west]{$(l_2):\frac{m_1}{p_1}+\frac{m_2}{p_2}=1$};
    \draw[white, thin] (60mm,53mm) -- (60mm,53mm) node[black,anchor=west]{$(l_3):\frac{m_1}{p_1}+\frac{m_2}{p_2}=1+\frac1q$};
    \draw[white, thin] (60mm,47mm) -- (60mm,47mm) node[black,anchor=west]{$(l_4):\frac{m_1}{p_1}+\frac{m_2}{p_2}=2$};
    \end{tikzpicture}
    \caption{A visualization for $\Lambda_{\vec p}$ when $1<p<q$ and $1<p_1,p_2<\frac{n}{\al}$. The shape of $\Lambda_{\vec p}$ would change depending on the relative positions between $1,p,p_1,p_2,q$ and $\frac{n}{\al}$ on the real number line. \label{IndiceDomain}}
\end{figure}

For simplicity and clarity, we first state our results in 1-vector-weight setting: when $u^\frac1q=v_1^\frac{1}{p_1}v_2^\frac{1}{p_2}$ and $\frac1q=\frac{1}{p_1}+\frac{1}{p_2}-\frac\al n$. In this scenario, we will utilize the commonly used exponent scale $v_i=w_i^{p_i}$ for $i=1,2$.

\begin{theorem}\label{1VectorWeight}
Suppose $p_1,p_2>1$, $\frac1q=\frac{1}{p_1}+\frac{1}{p_2}-\frac\al n$. If $1\leqslant p\leqslant q$ or $p<1<q\leqslant\frac{p}{1-p}$ or $p\leqslant q\leqslant1$, and
\begin{align}\label{1VectorWeightCondition}
    (w_1,w_2)\in\bigcup_{\overrightarrow m \in \Lambda_{\vec p}}\A_{\left[\frac{p_1}{p_1-m_1+1},\,\frac{p_2}{p_2-m_2+1}\right],\,qm'}\,,
\end{align}
then
\begin{align}\label{BIbdd1w}
    \|\mathsf{BI}_{\alpha}(f,g)\|_{L^q(w_1^qw_2^q)} \lesssim \|f\|_{L^{p_1}(w_1^{p_1})} \|g\|_{L^{p_2}(w_2^{p_2})}.
\end{align}
\end{theorem}

\hfill

\begin{remark}
It would be interesting to see what the condition \eqref{1VectorWeightCondition} looks like in the case of power weights. Let $w_1(x)=|x|^A$ and $w_2(x)=|x|^B$, we have $(|x|^A,|x|^B)\in\A_{\left[\frac{p_1}{p_1-m_1+1},\,\frac{p_2}{p_2-m_2+1}\right],\,qm'}$ if and only if
    \begin{align*}
        M=\sup_Q \left(\dashint_Q |x|^{(A+B)qm'}dx\right)^{\frac{1}{qm'}}\left(\dashint_Q{|x|^{\frac{Ap_1}{1-m_1}}}dx\right)^{\frac{m_1-1}{p_1}} \left(\dashint_Q{|x|^{\frac{Bp_2}{1-m_2}}}dx\right)^{\frac{m_2-1}{p_2}} <\infty
    \end{align*}
    For every cube $Q\in\mathbb{R}^n$, if $|c_Q|>2\ell(Q)$, then $|x|\sim|c_Q|$ for all $x\in Q$, and hence
    $$M\approx|C_Q|^{A+B-A-B}=1.$$
    On the other hand, if $|c_Q|\leqslant2\ell(Q)$, then $Q\subset B(\mathsf{O},3\ell(Q))$, and by using polar coordinates in $\mathbb{R}^n$ have
    \begin{align*}
        M & \lesssim\sup_Q\,\ell(Q)^{\frac{-n}{qm'}+\frac{-n(m_1-1)}{p_1}+\frac{-n(m_2-1)}{p_2}} \left(\int_0^{3\ell(Q)}r^{(A+B)qm'}r^{n-1}dr\right)^{\frac{1}{qm'}} \\
        & \hspace{25mm} \times \left(\int_0^{3\ell(Q)}r^{\frac{Ap_1}{1-m_1}}r^{n-1}dr\right)^{\frac{m_1-1}{p_1}} \left(\int_0^{3\ell(Q)}r^{\frac{Bp_2}{1-m_2}}r^{n-1}dr\right)^{\frac{m_2-1}{p_2}} \\
        & \approx \sup_Q\,\ell(Q)^{\frac{-n}{qm'}+\frac{-n(m_1-1)}{p_1}+\frac{-n(m_2-1)}{p_2}+A+B+\frac{n}{qm'}-A+\frac{n(m_1-1)}{p_1}-B+\frac{n(m_2-1)}{p_2}}=1
    \end{align*}
    provided that $A$ and $B$ simultaneously satisfy the following conditions:
$$\begin{cases}\frac{Ap_1}{1-m_1}+n>0\\\frac{Ap_2}{1-m_2}+n>0\\(A+B)qm'+n>0.\end{cases}$$
Conversely, if any of the above conditions is violated, then one can choose the cube Q that contains the origin to see that $M=\infty$. In other words, we have shown that $(|x|^A,|x|^B)\in\A_{\left[\frac{p_1}{p_1-m_1+1},\,\frac{p_2}{p_2-m_2+1}\right],\,qm'}$ if and only if
$$(A,B)\in\triangle_{\overrightarrow m,\vec p}\coloneqq \left\{A<\frac{n(m_1-1)}{p_1} \, \text{ and } \, B<\frac{n(m_2-1)}{p_2} \, \text{ and } \, A+B>\frac{-n}{qm'}\right\}.$$
Therefore, our condition \eqref{1VectorWeightCondition} is equivalent to
$$(A,B)\in\bigcup_{\overrightarrow m \in \Lambda_{\vec p}}\triangle_{\overrightarrow m,\vec p}$$
whose shape varies depending on the relative positions between $1,p,p_1,p_2,q$ and $\frac{n}{\al}$ on the real number line. An example of such shapes is shown in Figure \ref{Compare}.
\end{remark}

\begin{remark}
The result stated in Theorem \ref{1VectorWeight} applies to a more general class of weights, not just the power weights. However, our result is sufficient but not necessary in general. Komori-Furuya \cite{Komori2019} proved that the necessary and sufficient condition for \eqref{BIbdd1w} when $w_1(x)=|x|^A$ and $w_2(x)=|x|^B$ is:
\begin{align} \label{Komori}
    \begin{cases}A<\frac{n}{p_1'} \, \text{ and } \, A\leqslant n-\al \\ B<\frac{n}{p_2'} \, \text{ and } \, B\leqslant n-\al \\ A+B>\frac{-n}{q} \, \text{ and } \, A+B\geqslant \al-n \end{cases}
\end{align}
while \eqref{1VectorWeightCondition} restricted to these power weights gives a smaller domain for $(A,B)$, see Figure \ref{Compare}, since we have:
$$\frac{n(m_i-1)}{p_i}\leqslant\frac{n}{p_i'} \, , \, \frac{-n}{qm'}\geqslant\frac{-n}{q} \, ,$$
and $$\frac{n(m_i-1)}{p_i}<n-\al \, , \, \frac{-n}{qm'}>\al-n \, ,$$
because $\frac1q=\frac{1}{p_1}+\frac{1}{p_2}-\frac\al n$ and $\frac{m_1}{p_1}+\frac{m_2}{p_2}=1+\frac{1}{mq}$ implies:
$$\frac{n(m_1-1)}{p_1}+\frac{n(m_2-1)}{p_2}=n-\al-\frac{n}{qm'}.$$
\end{remark}

\begin{figure}[h]
    \begin{tikzpicture}[scale=0.9]
    \draw[black, thin,->] (-70mm,0mm) -- (60mm,0mm) node [black,thick,anchor=west] {$A$};
    \draw[black, thin,->] (0mm,-80mm) -- (0mm,55mm) node [black,thick,anchor=south] {$B$};
    \draw[black, thin, dotted] (0mm,50mm) -- (50mm,0mm) node [black,thick,anchor=north west] {\small $n-\al$};
    \draw[black, thin, dotted] (0mm,50mm) -- (0mm,50mm) node [black,thick,anchor=east] {\small $n-\al$};
    \draw[black, thin] (40mm,0mm) -- (40mm,0mm) node [black,thick,anchor=north west] {\small $\frac{n}{p_1'}$};
    \draw[black, thin] (0mm,30mm) -- (0mm,30mm) node [black,thick,anchor=south east] {\small $\frac{n}{p_2'}$};
    \draw[black, thin,dotted] (10mm,-50mm) -- (10mm,0mm) node [black,thick,anchor=south] {\small $\frac{n}{p'}$};
    \draw[black, thin,dotted] (-50mm,10mm) -- (0mm,10mm) node [black,thick,anchor=west] {\small $\frac{n}{p'}$};
    \draw[black, thin,dotted] (-50mm,10mm) -- (-50mm,0mm) node [black,thick,anchor=north east] {\small $\al-n$};
    \draw[black, thin] (-40mm,0mm) -- (-40mm,0mm) node [black,thick,anchor=north east] {\small $-\frac{n}{q}$};
    \draw[black, thin] (0mm,-40mm) -- (0mm,-40mm) node [black,thick,anchor=north east] {\small $-\frac{n}{q}$};
    \draw[black, thin,dotted] (10mm,-50mm) -- (0mm,-50mm) node [black,thick,anchor=east] {\small $\al-n$};
    \filldraw[gray, opacity=0.5] (-50mm,10mm) -- (10mm,-50mm) -- (40mm,-50mm) -- (40mm,10mm) -- (20mm,30mm) -- (-50mm,30mm) -- (-50mm,10mm);
    \draw[black,very thick,dashed] (40mm,-80mm) -- (-70mm,30mm) -- (40mm,30mm) -- (40mm,-80mm);
    \end{tikzpicture}
    \caption{When $1<p<q$ and $1<p_1,p_2<\frac{n}{\al}$, the shaded region represents the domain given by \eqref{1VectorWeightCondition}; while the region enclosed by the dashed triangle represents the domain given by \eqref{Komori}. \label{Compare}}
\end{figure}

In 2-vector weight settings, we no longer have $u^\frac1q=v_1^\frac{1}{p_1}v_2^\frac{1}{p_2}$ nor $\frac1q=\frac{1}{p_1}+\frac{1}{p_2}-\frac\al n$, and things start to get more complicated. To state our new results, we need to introduce some new notations and concepts. For each $\overrightarrow m \in \Lambda_{\vec{p}}$, let $\mathscr{Y}_{\overrightarrow m,\vec{p}}$ be the collection of all vector Young functions $\vec \phi=(\phi_1,\phi_2,\phi)$ that satisfy the conditions: $\frac{t^{m_i/p_i}}{\phi_i^{-1}(t)}$ is the inverse of a Young function that belongs to $B_{p_i,\frac{p_iq}{p}}$ for $i=1,2$. Also, let $\mathscr{Y}^*_{\overrightarrow m,\vec{p}} \subset \mathscr{Y}_{\overrightarrow m,\vec{p}}$ be such that the Young function $\phi$ in $\vec \phi$ satisfies the condition: $\frac{t^{1/(mq)'}}{\phi^{-1}(t)}$ is the inverse of a Young function that belongs to $B_{q'}$.

\hfill

For each vector exponent $\vec p$ and each vector Young function $\vec \phi=(\phi_1,\phi_2,\phi)$, let $\A^{\vec\phi}_{\vec p}$ denote the collection of all triple weights $(u,v_1,v_2)$ that satisfy the following Muckenhoupt-type condition:
\begin{align}\label{MuckenhouptypeCondition}
    \sup_Q |Q|^{\frac{\al}{n}+\frac{1}{q}-\frac{1}{p}} \|u^{\frac{1}{q}}\|_{\phi,Q} \, \|v_1^{-\frac{1}{p_1}}\|_{\phi_1,Q} \, \|v_2^{-\frac{1}{p_2}}\|_{\phi_2,Q} < \infty.
\end{align}

When $\phi(t)=t^r$ with $r>1$, we have $\A^{\vec\phi}_{\vec p}=\A^{(\phi_1,\,\phi_2,\,t^r)}_{\vec p}$ denote the collection of all triple weights $(u,v_1,v_2)$ that satisfy 
\begin{align}\label{MuckenhouptypeConditionNOBUMPonU}
    \sup_Q |Q|^{\frac{\al}{n}+\frac{1}{q}-\frac{1}{p}} \|u^\frac{1}{q}\|_{L^r,Q} \, \|v_1^{-\frac{1}{p_1}}\|_{\phi_1,Q} \, \|v_2^{-\frac{1}{p_2}}\|_{\phi_2,Q} < \infty.
\end{align}

\hfill

In the following, we state our 2-weight results. The first result gives sufficient conditions for the weighted boundedness of $\mathsf{BI}_\al$ in the case $1\leqslant p\leqslant q$. Our proposed conditions extend the most recently known condition in \cite{KC2017}.

\begin{theorem}\label{1pq}
Given $p_1,p_2>1$ and $1\leq p\leq q$, if
\begin{align}\label{UnionCondition1pq}
    (u,v_1,v_2)\in \bigcup_{\substack{\overrightarrow m \, \in  \,\Lambda_{\vec p} \\ \vec \phi \, \in \, \mathscr{Y}^*_{\overrightarrow m,\vec p}}}\A^{\vec \phi}_{\vec p}
\end{align}
then
\begin{align*}
    \|\mathsf{BI}_{\alpha}(f,g)\|_{L^q(u)} \lesssim \|f\|_{L^{p_1}(v_1)} \|g\|_{L^{p_2}(v_2)}.
\end{align*}
\end{theorem}

Our second result in 2-weight settings gives sufficient conditions for the weighted boundedness of $\mathsf{BI}_\al$ in the case $p\leqslant q\leqslant 1$. Our proposed conditions extend the  most recently known condition in \cite{Kabe2014}.

\begin{theorem}\label{pq1}
Given $p_1,p_2>1$ and $p\leqslant q\leqslant1$, if
\begin{align}\label{UnionConditionpq1}
    (u,v_1,v_2)\in \bigcup_{\substack{\overrightarrow m \, \in \, \Lambda_{\vec p} \\ (\phi_1\,,\,\phi_2\,,\,t^{qm'}) \, \in \, \mathscr{Y}_{\overrightarrow m,\vec p}}} \A^{(\phi_1,\,\phi_2,\,t^{qm'})}_{\vec p}
\end{align}
then
\begin{align*}
    \|\mathsf{BI}_{\alpha}(f,g)\|_{L^q(u)} \lesssim \|f\|_{L^{p_1}(v_1)} \|g\|_{L^{p_2}(v_2)}.
\end{align*}
\end{theorem}

The next theorem gives sufficient conditions for the weighted boundedness of $\mathsf{BI}_\al$ in the case when $p<1<q$. This is a completely new result in the study of the operator. In this case, we have found an upper restriction for the range of $q$; that is $q\leqslant\frac{p}{1-p}$. The reason for this restriction will be explained clearly in the proof of the theorems.

\begin{theorem}\label{p1q}
Given $p_1,p_2>1$ and $p<1<q\leqslant\frac{p}{1-p}$, if
\begin{align*}
    (u,v_1,v_2)\in \bigcup_{\substack{\overrightarrow m \, \in  \,\Lambda_{\vec p} \\ \vec \phi \, \in \, \mathscr{Y}^*_{\overrightarrow m,\vec p}}}\A^{\vec \phi}_{\vec p}
\end{align*}
then
\begin{align*}
    \|\mathsf{BI}_{\alpha}(f,g)\|_{L^q(u)} \lesssim \|f\|_{L^{p_1}(v_1)} \|g\|_{L^{p_2}(v_2)}.
\end{align*}
\end{theorem}

\begin{remark}\label{powerbumpremark}
    In both Theorems \ref{1pq} and \ref{p1q}, and like-wise for Theorem \ref{pq1}, if we choose $\phi$ and $\phi_i$ to be power-bumps, for instance
    \begin{align*}
        \phi(t)&=t^{q(m'+\delta)} \\
        \phi_i(t)&=t^{p_i(m_i'-1+\delta)}
    \end{align*}
with arbitrarily small $\delta>0$, then the inverses of $\frac{t^{1/(mq)'}}{\phi^{-1}}$ and $\frac{t^{m_i/p_i}}{\phi_i^{-1}}$ are respectively the Young functions:
    \begin{align*}
        \psi(t) & = t^\frac{(mq)'(qm'+\delta)}{qm'+\delta-(mq)'} \in B_{q'}, \\
        \psi_i(t) & = t^\frac{p_i(m_i'+m_i\delta)'}{m_i} \in B_{p_i}\subset B_{p_i,\frac{p_iq}{p}},
    \end{align*}
and condition \eqref{MuckenhouptypeCondition} would become:
    $$\sup_Q |Q|^{\frac{\al}{n}+\frac{1}{q}-\frac{1}{p}} \|u^{\frac{1}{q}}\|_{L^{q(m'+\delta)},Q} \, \|v_1^{-\frac{1}{p_1}}\|_{L^{p_1(m_1'-1+\delta)},Q} \, \|v_2^{-\frac{1}{p_2}}\|_{L^{p_2(m_2'-1+\delta)},Q} < \infty.$$
Since $\delta$ in each of the Young functions could have been chosen arbitrarily, one could have manipulated the exponents to obtain the following equivalent condition:
    $$\sup_Q |Q|^{\frac{\al}{n}+\frac{1}{q}-\frac{1}{p}} \|u^{\frac{1}{q}}\|_{L^{qm'+\delta},Q} \, \|v_1^{-\frac{1}{p_1}}\|_{L^{p_1(m_1'-1)+\delta},Q} \, \|v_2^{-\frac{1}{p_2}}\|_{L^{p_2(m_2'-1)+\delta},Q} < \infty.$$
\end{remark}

\begin{remark}\label{logbumpremark}
    One could significantly reduce the size of the left-hand side in \eqref{MuckenhouptypeCondition} by replacing the power-bumps with the log-bumps, for instance
    \begin{align*}
        \phi(t)&=t^{qm'}\log(1+t)^{(q-1)m'+\delta} \\
        \phi_i(t)&=t^{p_i(m_i'-1)}\log(1+t)^{m_i'-1+\delta}
    \end{align*}
    with arbitrarily small $\delta>0$, then the inverses of $\frac{t^{1/(mq)'}}{\phi^{-1}}$ and $\frac{t^{m_i/p_i}}{\phi_i^{-1}}$ are respectively the Young functions:
    \begin{align*}
        \psi(t) & = t^{q'}\log(1+t)^{-1-\frac{\delta}{(q-1)m'}} \in B_{q'}, \\
        \psi_i(t) & = t^{p_i}\log(1+t)^{-1-(m_i-1)\delta} \in B_{p_i}\subset B_{p_i,\frac{p_iq}{p}},
    \end{align*}
which help improve condition \eqref{MuckenhouptypeCondition}.
\end{remark}

We note here that: one can make the functions $\phi,\phi_1,\phi_2$ arbitrarily smaller than the above mentioned log-bumps, for example considering the $\log(1+\log(1+...))$ functions, to keep improving condition \eqref{MuckenhouptypeCondition} infinitely much more. On top of that, the introduction of the triples $(m,m_1,m_2)$ allows us to take into account even more possible weights $u,v_1,v_2$. For instance, in Theorem \ref{1pq}, if we chose $m_i$ so that $\frac{m_1}{p_1}+\frac{m_2}{p_2}$ is close to $1+\frac{1}{q}$, we may have at least one $\phi_i$ close to $t^{p_i'}$, but $\phi$ will be close to $t^\infty$ (yielding $\infty$-norm) as a compensation. This is good in the sense that in case $u$ is very nice, then we can impose a very strong norm on $u$ while requiring weaker norms on $v_i$. Since the choice of $m_i$ is ours, we can suppress any of the weights $u,v_1,v_2$ to leave more rooms for the others. Therefore, we have obtained the largest class of weights for the boundedness of $\mathsf{BI}_\al$ that ever appeared in the literature. Below, we will discuss this in more details by comparing our new results with the most recently known results. For the purpose of clarity and simplicity, all results shall be discussed in their corresponding power-bump settings. \\

Looking at the last condition in Remark \ref{powerbumpremark}, if we choose $m_1=\frac{p_1}{r}$ and $m_2=\frac{p_2}{s}$ where $p_1>r>1$, $p_2>s>1$ and $\frac1r+\frac1s=1$, that condition becomes
\begin{align*}
    & \sup_Q |Q|^{\frac{\al}{n}+\frac{1}{q}-\frac{1}{p}} \|u^{\frac{1}{q}}\|_{L^{q+\delta},Q} \, \|v_1^{-\frac{1}{p_1}}\|_{L^{\frac{p_1r}{p_1-r}+\delta},Q} \, \|v_2^{-\frac{1}{p_2}}\|_{L^{\frac{p_2s}{p_2-s}+\delta},Q} < \infty \\
    \iff & \sup_Q |Q|^{\frac{\al}{n}+\frac{1}{q}-\frac{1}{p}} \|u^{\frac{1}{q}}\|_{L^{q+\delta},Q} \, \|v_1^{-\frac{r}{p_1}}\|_{L^{\frac{p_1}{p_1-r}+\frac\delta r},Q}^\frac{1}{r} \, \|v_2^{-\frac{s}{p_2}}\|_{L^{\frac{p_2}{p_2-s}+\frac\delta s},Q}^\frac1s < \infty.
\end{align*}
Let $\psi(t)=t^{q+\delta}$, $\phi_1(t)=t^{\frac{p_1}{p_1-r}+\frac\delta r}$ and $\phi_2(t)=t^{\frac{p_2}{p_2-s}+\frac\delta s}$, then straightforward computations give $\overline{\psi}\in B_{q'}$, $\overline{\phi_1}\in B_{\frac{p_1}{r}}$ and $\overline{\phi_2}\in B_{\frac{p_2}{s}}$, which is exactly the same condition of Theorem 2.2 in \cite{KC2017}. Of course, our Theorem \ref{1pq} is much more general because of more possible choices for $m_1$ and $m_2$, not to mention that our extra-integrability conditions $B_{p_i,\frac{p_iq}{q}}$ are better than the $B_{p_i}$ conditions. We note here that there is a slight difference between the Young functions in \cite{KC2017} and ours. Our $\phi_1$ and $\phi_2$ are, in fact, their $\phi_1$ and $\phi_2$ respectively composed with $(\cdot)^r$ and $(\cdot)^s$ inside, and these differences make their conditions $B_{\frac{p_1}{r}}$ and $B_{\frac{p_2}{s}}$ scale into $B_{p_1}$ and $B_{p_2}$ in the context of this paper.\\

For Theorem \ref{pq1}, if we choose $m_1=p_1$ and $m_2=p_2$, by a similar argument as in Remark \ref{powerbumpremark}, condition \eqref{MuckenhouptypeConditionNOBUMPonU} becomes
\begin{align*}
    & \sup_Q |Q|^{\frac{\al}{n}+\frac{1}{q}-\frac{1}{p}} \|u^{\frac{1}{q}}\|_{L^{\frac{q}{1-q}},Q} \, \|v_1^{-\frac{1}{p_1}}\|_{L^{p_1'+\delta_1},Q} \, \|v_2^{-\frac{1}{p_2}}\|_{L^{p_2'+\delta_2},Q} < \infty \\
    \iff & \sup_Q |Q|^{\frac{\al}{n}+\frac{1}{q}-\frac{1}{p}} \left(\dashint_Qu^\frac{1}{1-q}\right)^{\frac{1-q}{q}} \left(\dashint_Qv_1^{-\frac{r_1p_1'}{p_1}}\right)^{\frac{1}{r_1p_1'}} \left(\dashint_Qv_2^{-\frac{r_2p_2'}{p_2}}\right)^{\frac{1}{r_2p_2'}} < \infty
\end{align*}
where $r_i=1+\frac{\delta_i}{p_i'}$. By adopting the scales $u\rightarrow u^q$, $v_i\rightarrow v_i^{p_i}$ and manipulating the $\delta_i$'s so that $r_1=r_2=r$, we end up having
$$\sup_Q |Q|^{\frac{\al}{n}+\frac{1}{q}-\frac{1}{p}} \left(\dashint_Qu^\frac{q}{1-q}\right)^{\frac{1-q}{q}} \left(\dashint_Qv_1^{-rp_1'}\right)^{\frac{1}{rp_1'}} \left(\dashint_Qv_2^{-rp_2'}\right)^{\frac{1}{rp_2'}} < \infty$$
which gives the same condition of Theorem 1.1 in \cite{Kabe2014}. 

\hfill

\begin{proof}[Proof of Theorem \ref{1VectorWeight}]
The proposed condition in the theorem implies that the pair of weights $(w_1,w_2)\in\mathsf{A}_{\left[\frac{p_1}{p_1-m_1+1},\,\frac{p_2}{p_2-m_2+1}\right],\,qm'}$, for some triple $(m,m_1,m_2)$. Since $\left(\frac{p_i}{p_i-m_i+1}\right)'=p_i(m_i'-1)$, by Theorem \ref{1wApqProperty}, we have
$$(w_1w_2)^{qm'}\in \mathsf{A}_{2qm'}\subset\mathsf{A}_\infty \qquad \text{and} \qquad w_i^{-p_i(m_i'-1)}\in \mathsf{A}_{2p_i(m_i'-1)}\subset\mathsf{A}_\infty.$$

By Lemma \ref{AinfinityProperty}, with appropriate re-scaling on the exponents, there exists a number $\delta>0$ such that
\begin{align*}
    \left(\dashint_Q(w_1w_2)^{qm'+\delta}\right)^\frac{1}{qm'+\delta} & \lesssim \left(\dashint_Qw_1^{qm'}w_2^{qm'}\right)^\frac{1}{qm'}, \\
    \left(\dashint_Qw_i^{-[p_i(m_i'-1)+\delta]}\right)^\frac{1}{p_i(m_i'-1)+\delta} & \lesssim \left(\dashint_Qw_i^{-p_i(m_i'-1)}\right)^\frac{1}{p_i(m_i'-1)}.
\end{align*}
Let $u=w_1^qw_2^q$ and $v_i=w_i^{p_i}$, then by Remark \ref{powerbumpremark} and Theorems \ref{1pq}, \ref{p1q}, \ref{pq1}, we obtain the weighted boundedness for $\mathsf{BI}_\al$. 
\end{proof}

Our results yields immediate applications for the natural fractional maximal function associated to $\mathsf{BI}_{\alpha}$, namely

$$\mathsf{BM}_{\alpha}(f,g)(x)=\sup_{Q\ni x}|Q|^{\frac{\al}{n}-1}\int_{|y|_\infty\leqslant\ell(Q)}|f(x-y)\,g(x+y)|\,dy.$$

It has been known from \cite{DL2002} that
$$\mathsf{BM}_{\alpha}(f,g)(x)\leqslant C\,\mathsf{BI}_{\alpha}(f,g)(x),$$
for $0<\al<n$. Therefore, we have the following corollaries.

\begin{cor}
Given $p_1,p_2>1$ and $1\leq p\leq q$, if
\begin{align*}
    (u,v_1,v_2)\in \bigcup_{\substack{\overrightarrow m \, \in  \,\Lambda_{\vec p} \\ \vec \phi \, \in \, \mathscr{Y}^*_{\overrightarrow m,\vec p}}} \A^{\vec\phi}_{\vec p}
\end{align*}
then
\begin{align*}
    \|\mathsf{BM}_{\alpha}(f,g)\|_{L^q(u)} \lesssim \|f\|_{L^{p_1}(v_1)} \|g\|_{L^{p_2}(v_2)}.
\end{align*}
\end{cor}

\begin{cor}
Given $p_1,p_2>1$ and $p\leqslant q\leqslant1$, if
\begin{align*}
    (u,v_1,v_2)\in \bigcup_{\substack{\overrightarrow m \, \in  \,\Lambda_{\vec p} \\ (\phi_1\,,\,\phi_2\,,\,t^{qm'}) \, \in \, \mathscr{Y}_{\overrightarrow m,\vec p}}} \A^{(\phi_1\,,\,\phi_2\,,\,t^{qm'})}_{\vec p}
\end{align*}
then
\begin{align*}
    \|\mathsf{BM}_{\alpha}(f,g)\|_{L^q(u)} \lesssim \|f\|_{L^{p_1}(v_1)} \|g\|_{L^{p_2}(v_2)}.
\end{align*}
\end{cor}

\begin{cor}
Given $p_1,p_2>1$ and $p<1<q\leqslant\frac{p}{1-p}$, if
\begin{align*}
    (u,v_1,v_2)\in \bigcup_{\substack{\overrightarrow m \, \in  \,\Lambda_{\vec p} \\ \vec \phi \, \in \, \mathscr{Y}^*_{\overrightarrow m,\vec p}}} \A^{\vec\phi}_{\vec p}
\end{align*}
then
\begin{align*}
    \|\mathsf{BM}_{\alpha}(f,g)\|_{L^q(u)} \lesssim \|f\|_{L^{p_1}(v_1)} \|g\|_{L^{p_2}(v_2)}.
\end{align*}
\end{cor}

\begin{cor}
Suppose $p_1,p_2>1$, $\frac1q=\frac{1}{p_1}+\frac{1}{p_2}-\frac\al n$. If $1\leqslant p\leqslant q$ or $p<1<q\leqslant\frac{p}{1-p}$ or $p\leqslant q\leqslant1$, and 
$$(w_1,w_2)\in\bigcup_{\overrightarrow m \in \Lambda_{\vec p}}\A_{\left[\frac{p_1}{p_1-m_1+1},\,\frac{p_2}{p_2-m_2+1}\right],\,qm'}\,.$$
then
\begin{align*}
    \|\mathsf{BM}_{\alpha}(f,g)\|_{L^q(w_1^qw_2^q)} \lesssim \|f\|_{L^{p_1}(w_1^{p_1})} \|g\|_{L^{p_2}(w_2^{p_2})}.
\end{align*}
\end{cor}

\hfill

\section{Proof of the Main Theorems}\label{ProofMain}

We will begin with the proof for Theorems \ref{1pq} and \ref{p1q} together, as they are almost completely the same. Their differences will be pointed out clearly along the way we prove them.

\begin{proof}[Proof of Theorems \ref{1pq} and \ref{p1q}]

\hfill

Without loss of generality, we assume that $f$ and $g$ are non-negative $C_c^\infty$-functions on $\R^n$. By duality, for every non-negative function $h\in L^{q'}(\R^n)$, we only need to gain control over the following integral:
\begin{align*}
    \lefteqn{\int_{\mathbb{R}^n} BI_{\alpha}(f,g)(x) \, h(x)\, u(x)^{\frac{1}{q}} dx} \hspace{4cm} \\
    & \lesssim \sum_{Q\in\mathscr{D}}|Q|^{\frac{\alpha}{n}-1} \int_Q \left[(f\1_{3Q})*(g\1_{3Q})\right](2x) \, h(x) \, u(x)^{\frac{1}{q}} \, dx
\end{align*}
where we have utilized the estimate \eqref{ConvolutionDomination}.

Since $p_i>1$, there exist $m_i$ such that $1\leqslant m_i\leqslant p_i$ and $\frac{m_1}{p_1}+\frac{m_2}{p_2}\geqslant1$. Let $m\geqslant1$ be defined via the equation $\frac{m_1}{p_1}+\frac{m_2}{p_2}=1+\frac{1}{mq}$. The existence of $m$ is always possible when $p\geqslant1$ due to the continuity of the function $\frac{m_1}{p_1}+\frac{m_2}{p_2}$ of the two variables $m_1$ and $m_2$. When $p<1$, we then have $1+\frac{1}{mq}=\frac{m_1}{p_1}+\frac{m_2}{p_2}\geqslant\frac{1}{p}$ which implies $mq\leqslant\frac{p}{1-p}$. If we had $q>\frac{p}{1-p}$, then there would be no choice for such $m\geqslant1$  (see Figure \ref{IndiceDomain}, and imagine when the line $(l_3)$ goes below $(l_1)$), which in turns implies the non-existence of the later defined Young function $\phi$. So, when $p<1$, our theorem would only be valid for $q\leqslant\frac{p}{1-p}$.

\hfill

By H\"older's and Young's inequalities, we have
\begin{align*}
    & \int_{\mathbb{R}^n} BI_{\alpha}(f,g)(x) \, h(x)\, u(x)^{\frac{1}{q}} dx \\
    & \leqslant \sum_{Q\in\mathscr{D}} |Q|^{\frac{\alpha}{n}-1} \left[\int_Q \left[(f\1_{3Q})*(g\1_{3Q})\right](2x)^{mq} \, dx\right]^\frac{1}{mq} \left[\int_Q h(x)^{(mq)'} \, u(x)^{\frac{(mq)'}{q}} \, dx\right]^\frac{1}{(mq)'} \\
    & \lesssim \sum_{Q\in\mathscr{D}} |Q|^{\frac{\alpha}{n}-1} \left(\int_{3Q} f^\frac{p_1}{m_1}\right)^\frac{m_1}{p_1} \left(\int_{3Q} g^\frac{p_2}{m_2}\right)^\frac{m_2}{p_2} \left(\int_Q h^{(mq)'} \, u^{\frac{(mq)'}{q}}\right)^\frac{1}{(mq)'} \\
    & \simeq \sum_{Q\in\mathscr{D}} |Q|^{\frac{\alpha}{n}+1} \, \|f\|_{L^\frac{p_1}{m_1},3Q} \, \|g\|_{L^\frac{p_2}{m_2},3Q} \, \|hu^\frac{1}{q}\|_{L^{(mq)'},Q} \\
    & \lesssim \sum_{t=1}^{2^n} \sum_{P\in\mathscr{D}_t}|P|^{\frac{\alpha}{n}+1} \, \|f\|_{L^\frac{p_1}{m_1},P} \, \|g\|_{L^\frac{p_2}{m_2},P} \, \|hu^\frac{1}{q}\|_{L^{(mq)'},P}
\end{align*}
where the last inequality is obtained by utilizing Theorem \ref{1/3trick}: for every cube $3Q$, there exists a $t\in \{0,1/3\}^n$ and a cube $P\in\mathscr{D}_t$ such that $Q\subset P$ and $\ell(P)\leqslant 6\thinspace \ell(Q)$. Since the sizes of $3Q$ and $P$ are comparable, the number of different cubes $3Q$ contained in the same cube $P$ must be finite, and this is the reason for the validity of the last inequality. For simplicity in the later part of the proof, we relabel the cubes $P$ as $Q$ and have
\begin{align*}
    \int_{\mathbb{R}^n} \bigl|BI_{\alpha}(f,g)(x)\bigr| h(x)u(x)^{\frac{1}{q}} dx \lesssim \sum_{t=1}^{2^n} \sum_{Q\in\mathscr{D}_t}|Q|^{\frac{\alpha}{n}+1} \|f\|_{L^\frac{p_1}{m_1},Q} \|g\|_{L^\frac{p_2}{m_2},Q} \|hu^\frac{1}{q}\|_{L^{(mq)'},Q}.
\end{align*}

For each $t$, we are going to dominate the dyadic sum by a sum over a corresponding sparse family of cubes. For every $k\in\mathbb{Z}$, let $\{Q_{k,j}\}_j$ be a collection of disjoint cubes from $\mathscr{D}_t$ that are maximal with respect to
$$\|f\|_{L^\frac{p_1}{m_1},Q}\,\|g\|_{L^\frac{p_2}{m_2},Q} > 2^{k(n+1)\left(\frac{m_1}{p_1}+\frac{m_2}{p_2}\right)}.$$
Define $E_{k,j}=Q_{k,j}\setminus\bigcup_iQ_{k+1,i}$. The family $\{E_{k,j}\}_{k,j}$ is pair-wise disjoint. Furthermore, let $P$ denote an immediate dyadic parent of $Q_{k,j}$, by the maximality of $Q_{k,j}$ and $Q_{k+1,i}$ we have
\begin{align*}
    \lefteqn{\bigl|Q_{k,j}\cap\,\bigcup_iQ_{k+1,i}\bigr| = \sum_{Q_{k+1,i}\,\subseteq\,Q_{k,j}}|Q_{k+1,i}|} \\
    & \leqslant 2^{-(n+1)(k+1)} \sum_{Q_{k+1,i}\,\subseteq\,Q_{k,j}} \left(\int_{Q_{k+1,i}}f^\frac{p_1}{m_1}\right)^\frac{m_1p_2}{m_1p_2+m_2p_1} \left(\int_{Q_{k+1,i}}g^\frac{p_2}{m_2}\right)^\frac{m_2p_1}{m_1p_2+m_2p_1} \\
    & \leqslant 2^{-(n+1)(k+1)} \left(\sum_{\substack{Q_{k+1,i}\\\subseteq\,Q_{k,j}}} \int_{Q_{k+1,i}}f^\frac{p_1}{m_1}\right)^\frac{m_1p_2}{m_1p_2+m_2p_1} \left(\sum_{\substack{Q_{k+1,i}\\\subseteq\,Q_{k,j}}} \int_{Q_{k+1,i}}g^\frac{p_2}{m_2}\right)^\frac{m_2p_1}{m_1p_2+m_2p_1} \\
    & \leqslant 2^{-(n+1)(k+1)} \left[\left(\int_{Q_{k,j}}f^\frac{p_1}{m_1}\right)^\frac{m_1}{p_1} \left(\int_{Q_{k,j}}g^\frac{p_2}{m_2}\right)^\frac{m_2}{p_2}\right]^\frac{p_1p_2}{m_1p_2+m_2p_1} \\
    & \leqslant 2^{-(n+1)(k+1)} \, |P|\left[\left(\dashint_Pf^\frac{p_1}{m_1}\right)^\frac{m_1}{p_1} \left(\dashint_Pg^\frac{p_2}{m_2}\right)^\frac{m_2}{p_2}\right]^\frac{p_1p_2}{m_1p_2+m_2p_1} \\
    & \leqslant 2^{-(n+1)(k+1)}\,2^n|Q_{k,j}|\,2^{k(n+1)}=\frac{1}{2}|Q_{k,j}|.
\end{align*}
This implies $\bigl|Q_{k,j}\bigr|\leqslant2\bigl|E_{k,j}\bigr|$, and hence $\{Q_{k,j}\}_{k,j}\coloneq\mathscr{S}_t$ is a sparse family. For every $k\in\mathbb{Z}$, let
$$C_k=\left\{Q\in\mathscr{D}_t:\thinspace 2^{k(n+1)\left(\frac{m_1}{p_1}+\frac{m_2}{p_2}\right)} < \|f\|_{L^\frac{p_1}{m_1},Q}\,\|g\|_{L^\frac{p_2}{m_2},Q} \leqslant 2^{(k+1)(n+1)\left(\frac{m_1}{p_1}+\frac{m_2}{p_2}\right)}\right\}.$$
Since every $Q\in \mathscr{D}_t$ for which $|Q|^{\frac{\alpha}{n}+1} \, \|f\|_{L^\frac{p_1}{m_1},Q} \, \|g\|_{L^\frac{p_2}{m_2},Q} \, \|hu^\frac{1}{q}\|_{L^{(mq)'},Q}$ is non-zero must be in some $C_k$, and every $Q\in C_k$ is contained in a unique $Q_{k,j}$, we have
\begin{align*} 
    \lefteqn{\sum_{Q\in\mathscr{D}_t}|Q|^{\frac{\alpha}{n}+1} \, \|f\|_{L^\frac{p_1}{m_1},Q} \, \|g\|_{L^\frac{p_2}{m_2},Q} \, \|hu^\frac{1}{q}\|_{L^{(mq)'},Q}} \hspace{16mm} \\
    & \leqslant \sum_{k\in\mathbb{Z}}\sum_{Q\in C_k} |Q|^{\frac{\alpha}{n}+1} \, \|f\|_{L^\frac{p_1}{m_1},Q} \, \|g\|_{L^\frac{p_2}{m_2},Q} \, \|hu^\frac{1}{q}\|_{L^{(mq)'},Q} \\
    & \leqslant \sum_{k\in\mathbb{Z}} 2^{(k+1)(n+1)\left(\frac{m_1}{p_1}+\frac{m_2}{p_2}\right)} \sum_{Q\in C_k} |Q|^{\frac{\alpha}{n}+1} \, \|hu^\frac{1}{q}\|_{L^{(mq)'},Q} \\
    & \leqslant \sum_{k\in\mathbb{Z}} 2^{(k+1)(n+1)\left(\frac{m_1}{p_1}+\frac{m_2}{p_2}\right)} \sum_j \sum_{\substack{Q\in \mathscr{D}_t \\ Q\subseteq Q_{k,j}}} |Q|^{\frac{\alpha}{n}+\frac{1}{mq}} \, \|hu^\frac{1}{q}\1_Q\|_{L^{(mq)'}} \\
    & = \sum_{k\in\mathbb{Z}} 2^{(k+1)(n+1)\left(\frac{m_1}{p_1}+\frac{m_2}{p_2}\right)} \sum_j \sum_{r=0}^{\infty}\sum_{\substack{Q\in \mathscr{D}_t, \thinspace Q\subseteq Q_{k,j} \\ \ell(Q)=2^{-r}\ell(Q_{k,j})}} |Q|^{\frac{\alpha}{n}+\frac{1}{mq}} \, \|hu^\frac{1}{q}\1_Q\|_{L^{(mq)'}} \\
    & \leqslant \sum_{k\in\mathbb{Z}} 2^{(k+1)(n+1)\left(\frac{m_1}{p_1}+\frac{m_2}{p_2}\right)} \sum_j|Q_{k,j}|^{\frac{\alpha}{n}+\frac{1}{mq}} \sum_{r=0}^{\infty}2^{-r\al-\frac{rn}{mq}} \\
    & \hspace{19mm} \times\left(\sum_{\substack{Q\in \mathscr{D}_t, \thinspace Q\subseteq Q_{k,j} \\ \ell(Q)=2^{-r}\ell(Q_{k,j})}} \int_Q(hu^\frac{1}{q})^{(mq)'}\right)^\frac{1}{(mq)'} \left(\sum_{\substack{Q\in \mathscr{D}_t, \thinspace Q\subseteq Q_{k,j} \\ \ell(Q)=2^{-r}\ell(Q_{k,j})}} 1\right)^\frac{1}{mq} \\
    & = \sum_{k\in\mathbb{Z}} 2^{(k+1)(n+1)\left(\frac{m_1}{p_1}+\frac{m_2}{p_2}\right)} \sum_j|Q_{k,j}|^{\frac{\alpha}{n}+1} \|hu^\frac{1}{q}\|_{L^{(mq)'},Q_{k,j}} \sum_{r=0}^{\infty}2^{-r\al} \\
    & \lesssim \sum_{k,j}|Q_{k,j}|^{\frac{\alpha}{n}+1} \|f\|_{L^\frac{p_1}{m_1},Q_{k,j}}\|g\|_{L^\frac{p_2}{m_2},Q_{k,j}}\|hu^\frac{1}{q}\|_{L^{(mq)'},Q_{k,j}} \\
    & = \sum_{Q\in\mathscr{S}_t}|Q|^{\frac{\alpha}{n}+1} \|f\|_{L^\frac{p_1}{m_1},Q}\|g\|_{L^\frac{p_2}{m_2},Q}\|hu^\frac{1}{q}\|_{L^{(mq)'},Q}\,,
\end{align*}
and we have successfully transitioned from a sum on a dyadic grid to a sum on a sparse family of cubes. From now on, we shall refer to this as the going sparse process.

\hfill

Let $\phi,\phi_1,\phi_2$ be the Young functions that comes from condition \eqref{UnionCondition1pq}, and let $\psi$ denote the inverse function of $\frac{t^{1/(mq)'}}{\phi^{-1}(t)}$ and $\psi_i$ denote the inverse function of $\frac{t^{m_i/p_i}}{\phi_i^{-1}(t)}$. We have $\psi\in B_{q'}$ and $\psi_i\in B_{p_i,\frac{p_iq}{p}}$. By the generalized H\"older's inequality for the Orlicz averages and condition \eqref{UnionCondition1pq}, we continue with the following estimates:
\begin{align*}
    & \int_{\mathbb{R}^n} BI_{\alpha}(f,g)(x) \, h(x)\, u(x)^{\frac{1}{q}} dx \\
    & \lesssim \sum_{t=1}^{2^n} \sum_{Q\in\mathscr{S}_t} |Q|^{\frac{\alpha}{n}+1} \, \|fv_1^\frac{1}{p_1}\|_{\psi_1,Q} \, \|v_1^{-\frac{1}{p_1}}\|_{\phi_1,Q} \, \|gv_2^\frac{1}{p_2}\|_{\psi_2,Q} \, \|v_2^{-\frac{1}{p_2}}\|_{\phi_2,Q} \, \|h\|_{\psi,Q} \, \|u^{\frac{1}{q}}\|_{\phi,Q} \\
    & \lesssim \sum_{t=1}^{2^n} \sum_{Q\in\mathscr{S}_t} |Q|^{\frac{1}{p}+\frac{1}{q'}} \, \|fv_1^\frac{1}{p_1}\|_{\psi_1,Q} \, \|gv_2^\frac{1}{p_2}\|_{\psi_2,Q} \, \|h\|_{\psi,Q} \\
    & = \sum_{t=1}^{2^n} \sum_{Q\in\mathscr{S}_t} |Q|^\frac{1}{p_1} \|fv_1^\frac{1}{p_1}\|_{\psi_1,Q} \, |Q|^\frac{1}{p_2} \|gv_2^\frac{1}{p_2}\|_{\psi_2,Q} \, |Q|^\frac{1}{q'} \|h\|_{\psi,Q} \\
    & \lesssim \sum_{t=1}^{2^n} \left[\sum_{Q\in\mathscr{S}_t} |Q|^{\frac{q}{p}-1}\,\|fv_1^\frac{1}{p_1}\|^\frac{p_1q}{p}_{\psi_1,Q} \, |E_Q|\right]^\frac{p}{p_1q} \left[\sum_{Q\in\mathscr{S}_t} |Q|^{\frac{q}{p}-1}\,\|gv_2^\frac{1}{p_2}\|^\frac{p_2q}{p}_{\psi_2,Q} \, |E_Q|\right]^\frac{p}{p_2q} \\
    & \hspace{89mm}\times \left[\sum_{Q\in\mathscr{S}_t}\|h\|^{q'}_{\psi,Q} \, |E_Q|\right]^\frac{1}{q'} \\
    & \lesssim \sum_{t=1}^{2^n} \left[\sum_{Q\in\mathscr{S}_t} \int_{E_Q}M_{\al_1,\psi_1}(fv_1^\frac{1}{p_1})(x)^\frac{p_1q}{p}dx\right]^\frac{p}{p_1q} \\
    & \hspace{19mm}\times \left[\sum_{Q\in\mathscr{S}_t} \int_{E_Q}M_{\al_2,\psi_2}(gv_2^\frac{1}{p_2})(x)^\frac{p_2q}{p}dx\right]^\frac{p}{p_2q} \left[\sum_{Q\in\mathscr{S}_t}\int_{E_Q}M_{\psi}(h)(x)^{q'}dx\right]^\frac{1}{q'} \\
    & \leqslant \sum_{t=1}^{2^n} \left(\int_{\R^n} M_{\al_1,\psi_1}(fv_1^\frac{1}{p_1})(x)^\frac{p_1q}{p}\,dx\right)^\frac{p}{p_1q} \, \left(\int_{\R^n} M_{\al_2,\psi_2}(gv_2^\frac{1}{p_2})(x)^\frac{p_2q}{p}\,dx\right)^\frac{p}{p_2q} \\
    & \hspace{9cm} \times \left(\int_{\R^n}M_\psi(h)(x)^{q'}\right)^\frac{1}{q'} \\
    & \lesssim \|f\|_{L^{p_1}(v_1)} \, \|g\|_{L^{p_2}(v_2)} \, \|h\|_{q'}
\end{align*}
where $0\leqslant\al_i\coloneq n\left(\frac{1}{p_i}-\frac{p}{p_iq}\right)<n$, and we have applied either Theorem \ref{Bp} or Theorem \ref{Bpq} for each of the three maximal functions to obtain the last inequality. Since $q>1$, by duality we obtain the desired weighted bound for $\mathsf{BI_\al}$.
\end{proof}

\begin{proof}[Proof of Theorems \ref{pq1}]

\hfill

Again, we may assume that $f$ and $g$ are non-negative $C_c^\infty$-functions. Since $p_i>1$, there exist $m_i$ such that $1\leqslant m_i\leqslant p_i$. This leads to the existence of $m\geqslant1$ defined by $\frac{m_1}{p_1}+\frac{m_2}{p_2}=1+\frac{1}{mq}$. Because $p\leqslant q\leqslant1$, such choices are always possible. Also, when $q\leqslant1$, we can avoid the duality argument which causes the required extra bump on the weight $u$. By H\"older's and Young's inequalities, we have
\begin{align*}
    \lefteqn{\int_{\mathbb{R}^n} BI_{\alpha}(f,g)(x)^q \, u(x) \, dx} \hspace{3mm} \\
    & \lesssim \sum_{Q\in\mathscr{D}}|Q|^{q\left(\frac{\alpha}{n}-1\right)} \int_Q \left[(f\1_{3Q})*(g\1_{3Q})\right](2x)^q \, u(x) \, dx \\
    & \leqslant \sum_{Q\in\mathscr{D}} |Q|^{q\left(\frac{\alpha}{n}-1\right)} \left(\int_Q \left[(f\1_{3Q})*(g\1_{3Q})\right](2x)^{qm} \, dx\right)^\frac{1}{m} \left(\int_Q u(x)^{m'} \, dx\right)^\frac{1}{m'} \\
    & \lesssim \sum_{Q\in\mathscr{D}} |Q|^{q\left(\frac{\alpha}{n}-1\right)} \left(\int_{3Q} f(x)^{\frac{p_1}{m_1}} \, dx\right)^\frac{qm_1}{p_1} \left(\int_{3Q} g(x)^{\frac{p_2}{m_2}} \, dx\right)^\frac{qm_2}{p_2} \left(\int_Q u(x)^{m'} \, dx\right)^\frac{1}{m'} \\
    & \simeq \sum_{Q\in\mathscr{D}} |Q|^{\frac{q\alpha}{n}+1} \, \|f\|_{L^\frac{p_1}{m_1},3Q}^q \, \|g\|_{L^\frac{p_2}{m_2},3Q}^q \, \|u^\frac{1}{q}\|_{L^{qm'},Q}^q \\
    & \lesssim \sum_{t=1}^{2^n} \sum_{Q\in\mathscr{D}_t}|Q|^{\frac{q\alpha}{n}+1} \, \|f\|_{L^\frac{p_1}{m_1},Q}^q \, \|g\|_{L^\frac{p_2}{m_2},Q}^q \, \|u^\frac{1}{q}\|_{L^{qm'},Q}^q \\
    & \lesssim \sum_{t=1}^{2^n} \sum_{Q\in\mathscr{S}_t}|Q|^{\frac{q\alpha}{n}+1} \, \|f\|_{L^\frac{p_1}{m_1},Q}^q \, \|g\|_{L^\frac{p_2}{m_2},Q}^q \, \|u^\frac{1}{q}\|_{L^{qm'},Q}^q
\end{align*}
where the last inequality is obtained by a similar going sparse process as in our previous proof. Let $\phi_1,\phi_2$ be the Young functions that comes from condition \eqref{UnionConditionpq1}, and let $\psi_i$ denote the inverse function of $\frac{t^{m_i/p_i}}{\phi_i^{-1}(t)}$. We have $\psi_i\in B_{p_i,\frac{p_iq}{p}}$ which help us obtain the following estimates:
\begin{align*}
    \lefteqn{\int_{\mathbb{R}^n} BI_{\alpha}(f,g)(x)^q \, u(x) \, dx} \hspace{5mm} \\
    & \lesssim \sum_{t=1}^{2^n} \sum_{Q\in\mathscr{S}_t} |Q|^{\frac{q\alpha}{n}+1} \, \|fv_1^\frac{1}{p_1}\|_{\psi_1,Q}^q \, \|v_1^{-\frac{1}{p_1}}\|_{\phi_1,Q}^q \, \|gv_2^\frac{1}{p_2}\|_{\psi_2,Q}^q \, \|v_2^{-\frac{1}{p_2}}\|_{\phi_2,Q}^q \, \|u^\frac{1}{q}\|_{L^{qm'},Q}^q \\
    & \lesssim \sum_{t=1}^{2^n} \sum_{Q\in\mathscr{S}_t} |Q|^{\frac{q}{p}} \, \|fv_1^\frac{1}{p_1}\|_{\psi_1,Q}^q \, \|gv_2^\frac{1}{p_2}\|_{\psi_2,Q}^q \\
    & \leqslant \sum_{t=1}^{2^n} \left[\sum_{Q\in\mathscr{S}_t} |Q|^{\frac{q}{p}-1}\,\|fv_1^\frac{1}{p_1}\|^\frac{p_1q}{p}_{\psi_1,Q} \, |E_Q|\right]^\frac{p}{p_1} \left[\sum_{Q\in\mathscr{S}_t} |Q|^{\frac{q}{p}-1}\,\|gv_2^\frac{1}{p_2}\|^\frac{p_2q}{p}_{\psi_2,Q} \, |E_Q|\right]^\frac{p}{p_2} \\
    & \leqslant \sum_{t=1}^{2^n} \left[\sum_{Q\in\mathscr{S}_t} \int_{E_Q}M_{\al_1,\psi_1}(fv_1^\frac{1}{p_1})(x)^\frac{p_1q}{p}\,dx\right]^\frac{p}{p_1} \left[\sum_{Q\in\mathscr{S}_t} M_{\al_2,\psi_2}(gv_2^\frac{1}{p_2})(x)^\frac{p_2q}{p}\,dx\right]^\frac{p}{p_2} \\
    & \leqslant \sum_{t=1}^{2^n} \left(\int_{\R^n} M_{\al_1,\psi_1}(fv_1^\frac{1}{p_1})(x)^\frac{p_1q}{p}\,dx\right)^\frac{p}{p_1} \left(\int_{\R^n} M_{\al_2,\psi_2}(gv_2^\frac{1}{p_2})(x)^\frac{p_2q}{p}\,dx\right)^\frac{p}{p_2} \\
    & \lesssim \|f\|^q_{L^{p_1}(v_1)} \, \|g\|^q_{L^{p_2}(v_2)}
\end{align*}
where $\al_i\coloneq n\left(\frac{1}{p_i}-\frac{p}{p_iq}\right)$.
\end{proof}

\hfill

\section{The Commutators}\label{Comm}

In this section, we investigate how the commutators would affects our operator $\mathsf{BI}_\al$ and its boundedness conditions. Our findings are new and extends the results in \cite{KC2017} where the commutators on $\mathsf{BI}_\al$ were defined as follows: given a function $b$, the commutator by $b$ with the first component of $\mathsf{BI}_\al$ is defined as
$$
[b,\mathsf{BI_\al}]_1(f,g)=b \thinspace \mathsf{BI_\al}(f,g)-\mathsf{BI_\al}(bf,g),
$$
while the commutator with the second component of $\mathsf{BI}_\al$ is defined by
$$
[b,\mathsf{BI_\al}]_2(f,g)=b \thinspace \mathsf{BI_\al}(f,g)-\mathsf{BI_\al}(f,bg).
$$
If we sequentially apply the commutators by $b_1,...,b_N$ with the first, the second or a mixture of first and second components of $\mathsf{BI}_\al$, we end up getting the general product commutators:
$$
[\vec{b},\mathsf{BI_\al}]_{\vec{\beta}}=[b_N,[b_{N-1}...,[b_2,[b_1,\mathsf{BI_\al}]_{\beta_1}]_{\beta_2}...]_{\beta_{N-1}}]_{\beta_N}
$$
where $\vec{b}=(b_1,...,b_N)$ and $\vec{\beta}=(\beta_1,...,\beta_N)\in\ \{1,2\}^N$. One can prove that
\begin{equation*}
[\sigma(\vec{b}),\mathsf{BI_\al}]_{\sigma(\vec{\beta})}=[\vec{b},\mathsf{BI_\al}]_{\vec{\beta}}
\end{equation*}
where $\sigma$ is any permutation on the N symbols: $1,...,N$. In particular, that is true for $\sigma(\vec{\beta})=(1,...,1,2,...,2)$. Therefore, from now on we will always assume that $\vec{\beta}=(1,...,1,2,...,2)$, and reserve the notation $M$ to denote the number of first component commutators in the general product commutator.

\hfill

Let $\mathsf{K}=\{0,1,...,M\}\times\{0,1,...,N-M\}$. For each $\overrightarrow m \in \Lambda_{\vec{p}}$ and each $\vec k=(k_1,k_2)\in\mathsf{K}$, let $\mathscr{Y}_{\vec k,\overrightarrow m,\vec{p}}$ be the collection of all vector Young functions $\vec \phi=(\phi_1,\phi_2,\phi)$ that satisfy the conditions: $\frac{t^{m_i/p_i}}{\phi_i^{-1}(t)\log(1+t)^{k_i}}$ is the inverse of a Young function that belongs to $B_{p_i,\frac{p_iq}{p}}$ for $i=1,2$. In addition, let $\mathscr{Y}^*_{\vec k,\overrightarrow m,\vec{p}} \subset \mathscr{Y}_{\vec k,\overrightarrow m,\vec{p}}$ be such that the Young function $\phi$ in $\vec \phi$ satisfies the condition: $\frac{t^{1/(mq)'}}{\phi^{-1}(t)\log(1+t)^{N-k_1-k_2}}$ is the inverse of a Young function that belongs to $B_{q'}$. Notice that when $\vec k=(0,0)$, we have $\mathscr{Y}_{(0,0),\overrightarrow m,\vec{p}}=\mathscr{Y}_{\overrightarrow m,\vec{p}}$ but $\mathscr{Y}^*_{(0,0),\overrightarrow m,\vec{p}}\neq\mathscr{Y}^*_{\overrightarrow m,\vec{p}}\,$. Examples for these Young functions are:
\begin{align*}
    \phi(t)&=t^{qm'}\log(1+t)^{kqm'+(q-1)m'+\delta} \\
    \phi_i(t)&=t^{p_i(m_i'-1)}\log(1+t)^{(kp_i+1)(m_i'-1)+\delta}
\end{align*}
with arbitrarily small $\delta>0$. Straightforward computations show that the inverses of $\frac{t^\frac{1}{(mq)'}}{\phi^{-1}(t)\log(1+t)^k}$ and $\frac{t^\frac{m_i}{p_i}}{\phi_i^{-1}(t)\log(1+t)^k}$ are respectively the Young functions:
\begin{align*}
    \psi(t) & = t^{q'}\log(1+t)^{-1-\frac{\delta}{(q-1)m'}} \in B_{q'}, \\
    \psi_i(t) & = t^{p_i}\log(1+t)^{-1-(m_i-1)\delta} \in B_{p_i}\subset B_{p_i,\frac{p_iq}{p}}.
\end{align*}

Utilizing these notations, we have the following theorems.

\begin{theorem}\label{1pqComm}
Given $p_1,p_2>1$ and $1\leq p\leq q$, if
\begin{align}\label{Condition1pqComm}
    (u,v_1,v_2)\in \bigcap_{\vec k\in\mathsf{K}} \bigcup_{\substack{\overrightarrow m \, \in  \,\Lambda_{\vec p} \\ \vec \phi\in\mathscr{Y}^*_{\vec k,\overrightarrow m,\vec p}}}\A^{\vec\phi}_{\vec p}
\end{align}
then
\begin{align*}
    \|[\vec{b},\mathsf{BI_\al}]_{\vec{\beta}}\|_{L^q(u)} \lesssim \|\vec{b}\|_{\mathsf{BMO}}\,\|f\|_{L^{p_1}(v_1)} \|g\|_{L^{p_2}(v_2)}
\end{align*}
where $\|\vec{b}\|_{\mathsf{BMO}}\,=\prod_{i=1}^N\|b_i\|_{\mathsf{BMO}}$.
\end{theorem}

\begin{theorem}\label{pq1Comm}
Given $p_1,p_2>1$ and $p\leqslant q\leqslant1$, if
\begin{align*}
    (u,v_1,v_2)\in \bigcap_{\vec k\in\mathsf{K}} \bigcup_{\substack{\overrightarrow m \, \in  \,\Lambda_{\vec p} \\ \left(\phi_1\,,\,\phi_2\,,\,t^{qm'}\log(1+t)^{qm'(N-|\vec k|)}\right) \in\,\mathscr{Y}_{\vec k,\overrightarrow m,\vec p}}}\A^{\left(\phi_1\,,\,\phi_2\,,\,t^{qm'}\log(1+t)^{qm'(N-|\vec k|)}\right)}_{\vec p}
\end{align*}
where $|\vec k|\coloneqq k_1+k_2$, then
\begin{align*}
    \|[\vec{b},\mathsf{BI_\al}]_{\vec{\beta}}\|_{L^q(u)} \lesssim \|\vec{b}\|_{\mathsf{BMO}}\,\|f\|_{L^{p_1}(v_1)} \|g\|_{L^{p_2}(v_2)}.
\end{align*}
\end{theorem}

\begin{theorem}\label{p1qComm}
Given $p_1,p_2>1$ and $p<1<q\leqslant\frac{p}{1-p}$, if
\begin{align*}
    (u,v_1,v_2)\in  \bigcap_{\vec k\in\mathsf{K}} \bigcup_{\substack{\overrightarrow m \, \in  \,\Lambda_{\vec p} \\ \vec \phi\in\mathscr{Y}^*_{\vec k,\overrightarrow m,\vec p}}}\A^{\vec\phi}_{\vec p}
\end{align*}
then
\begin{align*}
    \|[\vec{b},\mathsf{BI_\al}]_{\vec{\beta}}\|_{L^q(u)} \lesssim \|\vec{b}\|_{\mathsf{BMO}}\,\|f\|_{L^{p_1}(v_1)} \|g\|_{L^{p_2}(v_2)}.
\end{align*}
\end{theorem}

The proof of Theorems \ref{1pqComm} -- \ref{p1qComm} are similar to the proof of Theorems \ref{1pq} -- \ref{p1q}. There would only be some difficulties at the beginning when we try to handle the effects of the commutators on $\mathsf{BI}_\al$. Below, we give proof for Theorem \ref{1pqComm} and leave the others for interested readers.

\begin{proof}[Proof of Theorem \ref{1pqComm}]
For simplicity, we may assume that $f$ and $g$ are non-negative $C_c^\infty$-functions. It was shown in section 5 of \cite{KC2017} that
\begin{align*}
\lefteqn{ \int_{\mathbb{R}^n}\bigl|[\vec{b}, \mathsf{BI}_\alpha]_{\vec{\beta}}(f,g)(x)\bigr| \thinspace h(x) \thinspace u(x)^\frac{1}{q} \thinspace dx} \hspace{5mm} \\
    & \lesssim \sum_{A\subseteq\{1,...,M\}}\sum_{B\subseteq\{M+1,...,N\}} \sum_{Q\in\mathscr{D}} |Q|^{\frac{\alpha}{n}-1} \\
    & \hspace{12mm} \int_Q\int_{|y|_\infty\leqslant\ell(Q)} \prod_{i\in \overline{A}}|b_i(x-y)-\lambda_i| \prod_{i\in \overline{B}}|b_i(x+y)-\lambda_i| f(x-y)g(x+y)\thinspace dy \\
    & \hspace{71mm} \prod_{i\in A\cup B}|b_i(x)-\lambda_i| \thinspace \thinspace h(x) \thinspace u(x)^\frac{1}{q} \thinspace dx.
\end{align*}
where $A\cup\overline{A}=\{1,...,M\}$, $B\cup\overline{B}=\{M+1,...,N\}$, and $\lambda_i=\lambda_i(Q)=\dashint_{3Q}b_i(x)dx$ for each $Q\in \mathscr{D}$ and each $i=1,...,N$. By utilizing the idea illustrated in Figure \ref{ChangeVar}, we have:
\begin{align*}
\lefteqn{ \int_{\mathbb{R}^n}\bigl|[\vec{b}, \mathsf{BI}_\alpha]_{\vec{\beta}}(f,g)(x)\bigr| \thinspace h(x) \thinspace u(x)^\frac{1}{q} \thinspace dx} \hspace{5mm} \\
    & \lesssim \sum_{A\subseteq\{1,...,M\}}\sum_{B\subseteq\{M+1,...,N\}} \sum_{Q\in\mathscr{D}} |Q|^{\frac{\alpha}{n}-1} \\
    & \hspace{27mm} \int_Q\left(f\prod_{i\in \overline{A}}|b_i-\lambda_i|\1_{3Q}\right)*\left(g\prod_{i\in \overline{B}}|b_i-\lambda_i|\1_{3Q}\right)(2x) \\
    & \hspace{68mm} \left(\prod_{i\in A\cup B}|b_i(x)-\lambda_i|\right) h(x) \thinspace u(x)^\frac{1}{q} dx \\
    & \leqslant \sum_{A\subseteq\{1,...,M\}}\sum_{B\subseteq\{M+1,...,N\}} \sum_{Q\in\mathscr{D}} |Q|^{\frac{\alpha}{n}-1} \\
    & \hspace{24mm} \left[\int_Q \left(f\prod_{i\in \overline{A}}|b_i-\lambda_i|\1_{3Q}\right)*\left(g\prod_{i\in \overline{B}}|b_i-\lambda_i|\1_{3Q}\right)(2x)^{mq} \, dx\right]^\frac{1}{mq} \\
    & \hspace{39mm} \left[\int_Q h(x)^{(mq)'} u(x)^{\frac{(mq)'}{q}} \prod_{i\in A\cup B}|b_i(x)-\lambda_i|^{(mq)'} dx\right]^\frac{1}{(mq)'} \\
    & \leqslant \sum_{A\subseteq\{1,...,M\}}\sum_{B\subseteq\{M+1,...,N\}} \sum_{Q\in\mathscr{D}} |Q|^{\frac{\alpha}{n}-1} \left(\int_{3Q} f^\frac{p_1}{m_1}\prod_{i\in \overline{A}}|b_i-\lambda_i|^\frac{p_1}{m_1}\right)^\frac{m_1}{p_1} \\
    & \hspace{8mm}\left(\int_{3Q} g^\frac{p_2}{m_2}\prod_{i\in \overline{B}}|b_i-\lambda_i|^\frac{p_2}{m_2}\right)^\frac{m_2}{p_2} \left(\int_Q h^{(mq)'} \, u^{\frac{(mq)'}{q}}\prod_{i\in A\cup B}|b_i-\lambda_i|^{(mq)'}\right)^\frac{1}{(mq)'} \\
    & \simeq \sum_{A\subseteq\{1,...,M\}}\sum_{B\subseteq\{M+1,...,N\}} \sum_{Q\in\mathscr{D}} |Q|^{\frac{\alpha}{n}+1} \, \left\|f\prod_{i\in \overline{A}}(b_i-\lambda_i)\right\|_{L^\frac{p_1}{m_1},3Q} \\
    & \hspace{40mm} \left\|g\prod_{i\in \overline{B}}(b_i-\lambda_i)\right\|_{L^\frac{p_2}{m_2},3Q}\, \left\|hu^\frac{1}{q}\prod_{i\in A\cup B}(b_i-\lambda_i)\right\|_{L^{(mq)'},Q} \\
    & \lesssim \sum_{A\subseteq\{1,...,M\}}\sum_{B\subseteq\{M+1,...,N\}} \sum_{t=1}^{2^n}\sum_{Q\in\mathscr{D}_t} |Q|^{\frac{\alpha}{n}+1} \, \left\|f\prod_{i\in \overline{A}}(b_i-\lambda_i)\right\|_{L^\frac{p_1}{m_1},Q} \\
    & \hspace{41mm} \left\|g\prod_{i\in \overline{B}}(b_i-\lambda_i)\right\|_{L^\frac{p_2}{m_2},Q}\, \left\|hu^\frac{1}{q}\prod_{i\in A\cup B}(b_i-\lambda_i)\right\|_{L^{(mq)'},Q}
\end{align*}
where we have utilized Theorem \ref{1/3trick} to obtain the last inequality. By the generalized H\"older inequality, we have the following estimates:

\begin{align*}
    \left\|f\prod_{i\in \overline{A}}(b_i-\lambda_i)\right\|_{L^\frac{p_1}{m_1},Q}
    & \lesssim \|f\|_{L^\frac{p_1}{m_1}(\log L)^{\frac{p_1}{m_1}|\overline{A}|},Q}\prod_{i\in \overline{A}} \|b_i-\lambda_i\|_{\exp(L),Q} \\
    & \lesssim \|f\|_{L^\frac{p_1}{m_1}(\log L)^{\frac{p_1}{m_1}|\overline{A}|},Q} \prod_{i\in \overline{A}} \|b_i\|_{\mathsf{\textsf{BMO}}} \\
    \left\|g\prod_{i\in \overline{B}}(b_i-\lambda_i)\right\|_{L^\frac{p_2}{m_2},Q} & \lesssim \|g\|_{L^\frac{p_2}{m_2}(\log L)^{\frac{p_2}{m_2}|\overline{B}|},Q} \prod_{i\in \overline{B}} \|b_i-\lambda_i\|_{\exp(L),Q} \\
    & \lesssim \|g\|_{L^\frac{p_2}{m_2}(\log L)^{\frac{p_2}{m_2}|\overline{B}|},Q}\prod_{i\in \overline{B}} \|b_i\|_{\mathsf{\textsf{BMO}}} \\
    \left\|hu^\frac{1}{q}\prod_{i\in A\cup B}(b_i-\lambda_i)\right\|_{L^{(mq)'},Q} & \lesssim \|hu^\frac{1}{q}\|_{L^{(mq)'}(\log L)^{{(mq)'}|A\cup B|},Q}\prod_{i\in A\cup B}\|b_i-\lambda_i\|_{\exp(L),Q} \\
    & \lesssim \|hu^\frac{1}{q}\|_{L^{(mq)'}(\log L)^{{(mq)'}|A\cup B|},Q} \prod_{i\in A\cup B} \|b_i\|_{\mathsf{\textsf{BMO}}}.
\end{align*}
These estimates allows us to continue our previous estimates as follows:
\begin{align*}
\lefteqn{ \int_{\mathbb{R}^n}\bigl|[\vec{b}, \mathsf{BI}_\alpha]_{\vec{\beta}}(f,g)(x)\bigr| \thinspace h(x) \thinspace u(x)^\frac{1}{q} \thinspace dx} \hspace{5mm} \\
    & \lesssim \|\vec{b}\|\sum_{A\subseteq\{1,...,M\}}\sum_{B\subseteq\{M+1,...,N\}} \sum_{t=1}^{2^n}\sum_{Q\in\mathscr{D}_t} |Q|^{\frac{\alpha}{n}+1} \, \|f\|_{L^\frac{p_1}{m_1}(\log L)^{\frac{p_1}{m_1}|\overline{A}|},Q} \\
    & \hspace{46mm} \|g\|_{L^\frac{p_2}{m_2}(\log L)^{\frac{p_2}{m_2}|\overline{B}|},Q} \, \|hu^\frac{1}{q}\|_{L^{(mq)'}(\log L)^{{(mq)'}|A\cup B|},Q}.
\end{align*}
Observe that the first three sums in the last display are finite sums, so we only need to gain control over the inner-most sum on a dyadic grid $\D_t$. Next, we will replace the sum on $\D_t$ by a sum on a sparse family of cubes. For every $k\in\mathbb{Z}$, let $\{Q_{k,j}\}_j$ be a collection of disjoint cubes from $\mathscr{D}_t$ that are maximal with respect to
$$
\|f\|_{L^\frac{p_1}{m_1}(\log L)^{\frac{p_1}{m_1}|\overline{A}|},Q} \thinspace \|g\|_{L^\frac{p_2}{m_2}(\log L)^{\frac{p_2}{m_2}|\overline{B}|},Q} > 4^{(n+2)k}.
$$
Define $E_{k,j}=Q_{k,j}\setminus\bigcup_iQ_{k+1,i}$. The family $\{E_{k,j}\}_{k,j}$ is pair-wise disjoint. Furthermore, let $P$ denote an immediate dyadic parent of $Q_{k,j}$, by the maximality of $Q_{k,j}$ and $Q_{k+1,i}$ we have
\begin{align*}
\lefteqn{\left|Q_{k,j} \cap \thinspace\bigcup_iQ_{k+1,i}\right| = \sum_{Q_{k+1,i}\subseteq Q_{k,j}}\bigl|Q_{k+1,i}\bigr|} \\
    & \leqslant \frac{1}{2^{(n+2)(k+1)}} \sum_{Q_{k+1,i}\subseteq Q_{k,j}} \bigl|Q_{k+1,i}\bigr| \, \|f\|^\frac{1}{2}_{L^\frac{p_1}{m_1}(\log L)^{\frac{p_1}{m_1}|\overline{A}|},Q_{k+1,i}} \|g\|^\frac{1}{2}_{L^\frac{p_2}{m_2}(\log L)^{\frac{p_2}{m_2}|\overline{B}|},Q_{k+1,i}} \\
    & \leqslant \frac{1}{2^{(n+2)(k+1)}} \left[\sum_{i} \bigl|Q_{k+1,i}\bigr| \thinspace \|f\|_{L^\frac{p_1}{m_1}(\log L)^{\frac{p_1}{m_1}|\overline{A}|},Q_{k+1,i}}\right]^\frac{1}{2} \\
    & \hspace{63mm} \left[\sum_{i} \bigl|Q_{k+1,i}\bigr| \|g\|_{L^\frac{p_2}{m_2}(\log L)^{\frac{p_2}{m_2}|\overline{B}|},Q_{k+1,i}}\right]^\frac{1}{2}.
\end{align*}
For any $\lambda,\mu>0$, we have
\begin{align*}
\lefteqn{\left|Q_{k,j} \cap \thinspace\bigcup_iQ_{k+1,i}\right|} \\
    & \leqslant \frac{1}{2^{(n+2)(k+1)}} \left[\sum_{i} \bigl|Q_{k+1,i}\bigr| \left(\lambda + \frac{\lambda}{\bigl|Q_{k+1,i}\bigr|} \int_{Q_{k+1,i}} \frac{|f|^\frac{p_1}{m_1}}{\lambda^\frac{p_1}{m_1}} \log\left(1+\frac{|f|}{\lambda}\right)^{\frac{p_1}{m_1}|\overline{A}|}\right)\right]^\frac{1}{2} \\
    & \hspace{26mm} \left[\sum_{i} \bigl|Q_{k+1,i}\bigr| \left(\mu + \frac{\mu}{\bigl|Q_{k+1,i}\bigr|}\int_{Q_{k+1,i}} \frac{|g|^\frac{p_2}{m_2}}{\mu^\frac{p_2}{m_2}} \log\left(1+\frac{|g|}{\mu}\right)^{\frac{p_2}{m_2}|\overline{B}|}\right)\right]^\frac{1}{2} \\
    & = \frac{1}{2^{(n+2)(k+1)}} \left[\sum_{i} \lambda \int_{Q_{k+1,i}}\left(1+\frac{|f|^\frac{p_1}{m_1}}{\lambda^\frac{p_1}{m_1}} \log\left(1+\frac{|f|}{\lambda}\right)^{\frac{p_1}{m_1}|\overline{A}|}\right)\right]^\frac{1}{2} \\
    & \hspace{50mm}\left[\sum_{i}\mu\int_{Q_{k+1,i}} \left(1+\frac{|g|^\frac{p_2}{m_2}}{\mu^\frac{p_2}{m_2}} \log\left(1+\frac{|g|}{\mu}\right)^{\frac{p_2}{m_2}|\overline{B}|} \right)\right]^\frac{1}{2} \\
    & \leqslant \frac{1}{2^{(n+2)(k+1)}} \left[\lambda \int_{Q_{k,j}}\left(1+\frac{|f|^\frac{p_1}{m_1}}{\lambda^\frac{p_1}{m_1}} \log\left(1+\frac{|f|}{\lambda}\right)^{\frac{p_1}{m_1}|\overline{A}|}\right)\right]^\frac{1}{2} \\
    & \hspace{58mm}\left[\mu\int_{Q_{k,j}} \left(1+\frac{|g|^\frac{p_2}{m_2}}{\mu^\frac{p_2}{m_2}} \log\left(1+\frac{|g|}{\mu}\right)^{\frac{p_2}{m_2}|\overline{B}|} \right)\right]^\frac{1}{2} \\
    & \leqslant \frac{2^n}{2^{(n+2)(k+1)}} \bigl|Q_{k,j}\bigr| \left[\lambda + \frac{\lambda}{\bigl|P\bigr|} \int_{P}\frac{|f|^\frac{p_1}{m_1}}{\lambda^\frac{p_1}{m_1}} \log\left(1+\frac{|f|}{\lambda}\right)^{\frac{p_1}{m_1}|\overline{A}|}\right]^\frac{1}{2} \\
    & \hspace{63mm} \left[\mu + \frac{\mu}{\bigl|P\bigr|} \int_{P}\frac{|g|^\frac{p_2}{m_2}}{\mu^\frac{p_2}{m_2}} \log\left(1+\frac{|g|}{\mu}\right)^{\frac{p_2}{m_2}|\overline{B}|}\right]^\frac{1}{2}
\end{align*}
where $P$ is the immediate dyadic parent of $Q_{k,j}$ in $\D_t$. By taking the infimum over all $\lambda,\mu>0$, we have
\begin{align*}
    \left|Q_{k,j} \cap \thinspace\bigcup_iQ_{k+1,i}\right| & \leqslant \frac{2^{n+1}}{2^{(n+2)(k+1)}} \bigl|Q_{k,j}\bigr| \, \|f\|^\frac{1}{2}_{L^\frac{p_1}{m_1}(\log L)^{\frac{p_1}{m_1}|\overline{A}|},P} \, \|g\|^\frac{1}{2}_{L^\frac{p_2}{m_2}(\log L)^{\frac{p_2}{m_2}|\overline{B}|},P}.
\end{align*}
By the maximality of $Q_{k,j}$, we have
\begin{align*}
    \left|Q_{k,j} \cap \thinspace\bigcup_iQ_{k+1,i}\right| \leqslant \frac{2^{n+1}}{2^{(n+2)(k+1)}} \bigl|Q_{k,j}\bigr| \, 2^{(n+2)k} = \frac12 \, \bigl|Q_{k,j}\bigr|
\end{align*}
which implies $\left|Q_{k,j}\right|\leqslant2\left|E_{k,j}\right|$, and hence the family $\mathscr{S}_t=\{Q_{k,j}:k\in\Z,j\in\Z\}$ is sparse. Let
$$C_k=\left\{Q\in\D_t: 4^{(n+2)k}<\|f\|_{L^\frac{p_1}{m_1}(\log L)^{\frac{p_1}{m_1}|\overline{A}|},Q} \|g\|_{L^\frac{p_2}{m_2}(\log L)^{\frac{p_2}{m_2}|\overline{B}|},Q} \leqslant 4^{(n+2)(k+1)}\right\}.$$

For any $\lambda>0$ we have the following estimates:
\begin{align*}
\lefteqn{\sum_{Q\in\mathscr{D}_t} |Q|^{\frac{\alpha}{n}+1} \, \|f\|_{L^\frac{p_1}{m_1}(\log L)^{\frac{p_1}{m_1}|\overline{A}|},Q} \, \|g\|_{L^\frac{p_2}{m_2}(\log L)^{\frac{p_2}{m_2}|\overline{B}|},Q} \, \|hu^\frac{1}{q}\|_{L^{(mq)'}(\log L)^{{(mq)'}|A\cup B|},Q}} \hspace{5mm}  \\
    & \leqslant \sum_{k\in\mathbb{Z}}\sum_{Q\in C_k} |Q|^{\frac{\alpha}{n}+1} \|f\|_{L^\frac{p_1}{m_1}(\log L)^{\frac{p_1}{m_1}|\overline{A}|},Q} \|g\|_{L^\frac{p_2}{m_2}(\log L)^{\frac{p_2}{m_2}|\overline{B}|},Q} \\
    & \hspace{79mm} \|hu^\frac{1}{q}\|_{L^{(mq)'}(\log L)^{{(mq)'}|A\cup B|},Q} \\
    & \leqslant \sum_{k\in\mathbb{Z}} 4^{(n+2)(k+1)} \sum_{Q\in C_k} |Q|^{\frac{\alpha}{n}+1} \thinspace \|hu^\frac{1}{q}\|_{L^{(mq)'}(\log L)^{{(mq)'}|A\cup B|},Q} \\
    & \leqslant \sum_{k\in\mathbb{Z}} 4^{(n+2)(k+1)} \sum_{j\in\mathbb{Z}} \sum_{\substack{Q\in \mathscr{D} \\ Q\subseteq Q_{k,j}}} |Q|^{\frac{\alpha}{n}+1} \thinspace \|hu^\frac{1}{q}\|_{L^{(mq)'}(\log L)^{{(mq)'}|A\cup B|},Q} \\
    & \leqslant \sum_{k\in\mathbb{Z}} 4^{(n+2)(k+1)} \sum_{j\in\mathbb{Z}} \sum_{\substack{Q\in \mathscr{D} \\ Q\subseteq Q_{k,j}}} |Q|^{\frac{\alpha}{n}+1} \\
    & \hspace{41mm} \left[ \lambda + \frac{\lambda}{|Q|}\int_{Q} \frac{\bigl|hu^\frac{1}{q}\bigr|^{(mq)'}}{\lambda^{(mq)'}}\log\left(1+\frac{\bigl|hu^\frac{1}{q}\bigr|}{\lambda}\right)^{{(mq)'}|A\cup B|}\right] \\
    & \leqslant \sum_{k\in\mathbb{Z}} 4^{(n+2)(k+1)} \sum_{j\in\mathbb{Z}} \sum_{r=0}^{\infty}\sum_{\substack{Q\in \mathscr{D}, \thinspace Q\subseteq Q_{k,j} \\ \ell(Q)=2^{-r}\ell(Q_{k,j})}} |Q|^{\frac{\alpha}{n}} \\
    & \hspace{45mm} \lambda\int_{Q} \left[1+\frac{\bigl|hu^\frac{1}{q}\bigr|^{(mq)'}}{\lambda^{(mq)'}}\log\left(1+\frac{\bigl|hu^\frac{1}{q}\bigr|}{\lambda}\right)^{{(mq)'}|A\cup B|}\right] \\
    & \leqslant \sum_{k\in\mathbb{Z}} 4^{(n+2)(k+1)} \sum_{j\in\mathbb{Z}} \lambda \left|Q_{k,j}\right|^\frac{\al}{n} \sum_{r=0}^{\infty} 2^{-r\al} \sum_{\substack{Q\in \mathscr{D}, \thinspace Q\subseteq Q_{k,j} \\ \ell(Q)=2^{-r}\ell(Q_{k,j})}} \\
    & \hspace{47mm} \int_{Q} \left[1+\frac{\bigl|hu^\frac{1}{q}\bigr|^{(mq)'}}{\lambda^{(mq)'}}\log\left(1+\frac{\bigl|hu^\frac{1}{q}\bigr|}{\lambda}\right)^{{(mq)'}|A\cup B|}\right] \\
    & \leqslant \frac{2^{\alpha}}{2^{\alpha}-1} \sum_{k\in\mathbb{Z}} 4^{(n+2)(k+1)} \sum_{j\in\mathbb{Z}} \left|Q_{k,j}\right|^{\frac{\al}{n}+1} \\
    & \hspace{32mm} \left[ \lambda + \frac{\lambda}{|Q_{k,j}|}\int_{Q_{k,j}} \frac{\bigl|hu^\frac{1}{q}\bigr|^{(mq)'}}{\lambda^{(mq)'}}\log\left(1+\frac{\bigl|hu^\frac{1}{q}\bigr|}{\lambda}\right)^{{(mq)'}|A\cup B|}\right].
\end{align*}
By taking the infimum over all $\lambda>0$, we have accomplished our goal transitioning from the sum on $\D_t$ to a sum over the sparse family of cubes $\mathscr{S}_t$. We have:
\begin{align*}
    & \sum_{Q\in\mathscr{D}_t} |Q|^{\frac{\alpha}{n}+1} \, \|f\|_{L^\frac{p_1}{m_1}(\log L)^{\frac{p_1}{m_1}|\overline{A}|},Q} \, \|g\|_{L^\frac{p_2}{m_2}(\log L)^{\frac{p_2}{m_2}|\overline{B}|},Q} \, \|hu^\frac{1}{q}\|_{L^{(mq)'}(\log L)^{{(mq)'}|A\cup B|},Q} \\
    & \leqslant \frac{2^{\alpha+1}}{2^{\alpha}-1} \sum_{k\in\mathbb{Z}} 4^{(n+2)(k+1)} \sum_{j\in\mathbb{Z}} \left|Q_{k,j}\right|^{\frac{\al}{n}+1} \|hu^\frac{1}{q}\|_{L^{(mq)'}(\log L)^{{(mq)'}|A\cup B|},Q_{k,j}} \\
    & \lesssim \sum_{k\in\mathbb{Z}} \sum_{j\in\mathbb{Z}} \left|Q_{k,j}\right|^{\frac{\al}{n}+1}  \, \|f\|_{L^\frac{p_1}{m_1}(\log L)^{\frac{p_1}{m_1}|\overline{A}|},Q_{k,j}} \, \|g\|_{L^\frac{p_2}{m_2}(\log L)^{\frac{p_2}{m_2}|\overline{B}|},Q_{k,j}} \\
    & \hspace{81mm} \|hu^\frac{1}{q}\|_{L^{(mq)'}(\log L)^{{(mq)'}|A\cup B|},Q_{k,j}} \\
    & = \sum_{Q\in\mathscr{S}_t} \left|Q\right|^{\frac{\al}{n}+1} \|f\|_{L^\frac{p_1}{m_1}(\log L)^{\frac{p_1}{m_1}|\overline{A}|},Q} \|g\|_{L^\frac{p_2}{m_2}(\log L)^{\frac{p_2}{m_2}|\overline{B}|},Q} \|hu^\frac{1}{q}\|_{L^{(mq)'}(\log L)^{{(mq)'}|A\cup B|},Q} \\
    & \lesssim \sum_{Q\in\mathscr{S}_t} |Q|^{\frac{\alpha}{n}+1} \, \|fv_1^\frac{1}{p_1}\|_{\psi_1,Q} \, \|v_1^{-\frac{1}{p_1}}\|_{\phi_1,Q} \, \|gv_2^\frac{1}{p_2}\|_{\psi_2,Q} \, \|v_2^{-\frac{1}{p_2}}\|_{\phi_2,Q} \, \|h\|_{\psi,Q} \, \|u^{\frac{1}{q}}\|_{\phi,Q} \\
    & \lesssim\sum_{Q\in\mathscr{S}_t} |Q|^{\frac{1}{p}+\frac{1}{q'}} \, \|fv_1^\frac{1}{p_1}\|_{\psi_1,Q} \, \|gv_2^\frac{1}{p_2}\|_{\psi_2,Q} \, \|h\|_{\psi,Q}
\end{align*}
where $\phi,\phi_1,\phi_2$ be the Young functions that comes from condition \eqref{Condition1pqComm}, and $\psi,\psi_1,\psi_2$ respectively denote the inverses of the functions $\frac{t^\frac{1}{(mq)'}}{\phi^{-1}(t)\log(1+t)^{|A\cup B|}}$, $\frac{t^\frac{m_1}{p_1}}{\phi_1^{-1}(t)\log(1+t)^{|\overline{A}|}}$ and $\frac{t^\frac{m_2}{p_2}}{\phi_2^{-1}(t)\log(1+t)^{|\overline{B}|}}$. By condition \eqref{Condition1pqComm}, we have $\psi\in B_{q'}$ and $\psi_i\in B_{p_i,\frac{p_iq}{p}}$ where we have applied $\vec k=\left(|\overline{A}|,|\overline{B}|\right)$. The rest of the proof would follow exactly as shown in the proof of Theorem \ref{1pq} and \ref{p1q}.
\end{proof}

\hfill

\section{A Maximal Control Theorem} \label{Max}

For every pair of numbers $(r,s)\in[1,\infty)\times[1,\infty)$, we consider the maximal operators
$$\M_\al^{r,s}(f,g)(x)=\sup_{Q\ni x}|Q|^\frac{\al}{n} \left(\dashint_Q|f|^r\right)^\frac{1}{r} \left(\dashint_Q|g|^s\right)^\frac{1}{s}.$$
When $r=s=1$, this maximal operator reduces to the classical ``bilinear'' version of the Hardy-Littlewood maximal function
$$\M_\al(f,g)(x)=\M_\al^{1,1}(f,g)(x)=\sup_{Q\ni x}|Q|^\frac{\al}{n} \dashint_Q|f| \, \dashint_Q|g|.$$

The maximal operator $\M_\al^{r,s}$ was introduced and studied in \cite{KC2017}. The authors proved the following theorem for $p_1$ and $p_2$ be such that $p=\frac{p_1p_2}{p_1+p_2}>1$, but one can follow their proof and find that it actually works for all $p_1\geqslant r$ and $p_2\geqslant s$. Below we states the improved version of this theorem.

\begin{theorem} \label{2w-maximal-function-charaterization}
Suppose $0\leqslant\alpha<n$, $1<r\leqslant p_1$, $1<s\leqslant p_2$ and $p\leqslant q$. We have:
$$\M_\al^{r,s}:\,L^{p_1}(v_1)\times L^{p_2}(v_2)\to L^{q,\infty}(u)$$
if and only if
\begin{align*}
    \sup_Q |Q|^{\frac{\alpha}{n}+\frac{1}{q}-\frac{1}{p}}\left(\dashint_Q{u}\right)^{\frac{1}{q}} \left(\dashint_Q{v_1^{-\frac{r}{p_1-r}}}\right)^{\frac{p_1-r}{rp_1}} \left(\dashint_Q{v_2^{-\frac{s}{p_2-s}}}\right)^{\frac{p_2-s}{sp_2}} < \infty
\end{align*}
where $\left(\dashint_Q{v_1^{-\frac{r}{p_1-r}}}\right)^\frac{p_1-r}{r}=\left(\inf_Q{v_1}\right)^{-1}$ when $p_1=r$, and $\left(\dashint_Q{v_2^{-\frac{s}{p_2-s}}}\right)^\frac{p_2-s}{s}=\left(\inf_Q{v_2}\right)^{-1}$ when $p_2=s$.
\end{theorem}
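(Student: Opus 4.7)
My plan is to prove both directions of the characterization. The necessity follows the standard recipe of testing the weak-type inequality against weight-adapted bumps, while sufficiency requires a dyadic reduction via the $\frac{1}{3}$-trick, a maximal-cube stopping-time decomposition, and two applications of H\"older's inequality, with the hypothesis $p\leqslant q$ used decisively at the final summation step.

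For necessity, I would fix a cube $Q$ and test the hypothesized weak-type bound on $f = v_1^{-1/(p_1-r)}\1_Q$ and $g = v_2^{-1/(p_2-s)}\1_Q$, first assuming $p_1 > r$ and $p_2 > s$. A direct calculation gives
\[
\|f\|_{L^{p_1}(v_1)}^{p_1} = \int_Q v_1^{-r/(p_1-r)}\,dx, \qquad \Bigl(\dashint_Q f^r\Bigr)^{1/r} = \Bigl(\dashint_Q v_1^{-r/(p_1-r)}\Bigr)^{1/r},
\]
and analogously for $g$. Since $\M_\al^{r,s}(f,g)(x) \geqslant |Q|^{\alpha/n}(\dashint_Q f^r)^{1/r}(\dashint_Q g^s)^{1/s}$ for almost every $x \in Q$, the cube $Q$ lies inside $\{\M_\al^{r,s}(f,g) > \lambda\}$ for every $\lambda$ strictly below this pointwise lower bound. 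Inserting into the weak-type inequality, letting $\lambda$ tend to that bound, and balancing the powers of $|Q|$ (using $u(Q)^{1/q} = |Q|^{1/q}(\dashint_Q u)^{1/q}$ on the left and $|Q|^{1/p_1+1/p_2} = |Q|^{1/p}$ on the right) produces exactly the supremum condition in the statement. The endpoint cases $p_1 = r$ or $p_2 = s$ are obtained by testing with $f = \1_Q$ (respectively $g = \1_Q$); the pointwise inequality $f^{p_1} v_1 \geqslant (\inf_Q v_1)\,f^{p_1}$ shows that $(\inf_Q v_1)^{-1}$ is the correct limit of $(\dashint_Q v_1^{-r/(p_1-r)})^{(p_1-r)/r}$ as $p_1 \to r^+$.

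For sufficiency, Theorem \ref{1/3trick} lets me replace $\M_\al^{r,s}$ by a finite sum of shifted-dyadic variants $\M_\al^{r,s,\D^t}$, so it suffices to prove the weak-type estimate for each such $t$ separately. Fix $\lambda > 0$ and let $\{Q_j\}$ denote the maximal dyadic cubes in $\D^t$ satisfying $|Q_j|^{\alpha/n}\|f\|_{L^r, Q_j}\|g\|_{L^s, Q_j} > \lambda$; by maximality they are pairwise disjoint and cover the level set $E_\lambda = \{\M_\al^{r,s,\D^t}(f,g) > \lambda\}$. On each $Q_j$, H\"older's inequality with conjugate exponents $p_1/r$ and $p_1/(p_1-r)$ gives
\[
\Bigl(\dashint_{Q_j}|f|^r\Bigr)^{1/r} \leqslant \Bigl(\dashint_{Q_j}|f|^{p_1}v_1\Bigr)^{1/p_1}\Bigl(\dashint_{Q_j} v_1^{-r/(p_1-r)}\Bigr)^{(p_1-r)/(rp_1)},
\]
and analogously for $g$. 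Multiplying these bounds, inserting into $\lambda < |Q_j|^{\alpha/n}\|f\|_{L^r, Q_j}\|g\|_{L^s, Q_j}$, and then invoking the hypothesized weight condition to absorb the two $v_i$-averages together with the power $|Q_j|^{\alpha/n + 1/q - 1/p}$ against $(\dashint_{Q_j} u)^{1/q}$ yields
\[
\lambda^q\, u(Q_j) \lesssim \Bigl(\int_{Q_j}|f|^{p_1}v_1\Bigr)^{q/p_1}\Bigl(\int_{Q_j}|g|^{p_2}v_2\Bigr)^{q/p_2}.
\]

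Summing over $j$ requires collapsing $\sum_j a_j^{q/p_1} b_j^{q/p_2}$, where $a_j = \int_{Q_j}|f|^{p_1} v_1$ and $b_j = \int_{Q_j}|g|^{p_2}v_2$. Here the hypothesis $p \leqslant q$ enters decisively: since $p/q \leqslant 1$, sub-additivity of $t \mapsto t^{p/q}$ gives
\[
\Bigl(\sum_j a_j^{q/p_1} b_j^{q/p_2}\Bigr)^{p/q} \leqslant \sum_j a_j^{p/p_1} b_j^{p/p_2},
\]
and since $p/p_1 + p/p_2 = 1$, a final H\"older with exponents $p_1/p$ and $p_2/p$ combined with the disjointness of the $\{Q_j\}$ bounds the right-hand side by $\|f\|_{L^{p_1}(v_1)}^p\|g\|_{L^{p_2}(v_2)}^p$. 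Raising to the $1/p$ power and unwinding yields $\lambda\, u(E_\lambda)^{1/q} \lesssim \|f\|_{L^{p_1}(v_1)}\|g\|_{L^{p_2}(v_2)}$ uniformly in $\lambda$, which is the desired $L^{q,\infty}(u)$ bound. The main obstacle I expect is the endpoint $p_1 = r$ or $p_2 = s$: there the internal H\"older step degenerates and must be replaced by the pointwise bound $(\dashint_{Q_j}|f|^r)^{1/r} \leqslant (\inf_{Q_j} v_1)^{-1/p_1}(\dashint_{Q_j}|f|^{p_1}v_1)^{1/p_1}$, and one must verify that the exponent arithmetic still closes when the Muckenhoupt factor is read with the $\inf$-convention prescribed in the statement.
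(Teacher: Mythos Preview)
Your proposal is correct and essentially complete. Note, however, that the paper does not actually supply its own proof of this theorem: it states that the result was proved in \cite{KC2017} under the extra restriction $p>1$ and simply remarks that ``one can follow their proof and find that it actually works for all $p_1\geqslant r$ and $p_2\geqslant s$.'' Your sketch is precisely the standard argument one expects from that reference---weak-type testing for necessity, and for sufficiency a dyadic reduction followed by a maximal-cube decomposition, H\"older on each cube, the weight condition to absorb the $v_i$-averages, and the $p\leqslant q$ sub-additivity to sum---so there is no methodological difference to report.

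One small wrinkle: in the endpoint necessity ($p_1=r$, say) testing with $f=\1_Q$ does not quite do the job, because then $\|f\|_{L^{p_1}(v_1)}^{p_1}=\int_Q v_1$ rather than $(\inf_Q v_1)\,|Q|$, and the inequality $f^{p_1}v_1\geqslant(\inf_Q v_1)f^{p_1}$ points the wrong way. The usual fix is to test with $f=\1_E$ for a measurable $E\subset Q$ on which $v_1$ is within $\varepsilon$ of its essential infimum; the factors of $|E|^{1/p_1}$ then cancel exactly (since $r=p_1$) and one recovers the $\inf$-convention after letting $\varepsilon\to0$. You already flagged the endpoint as the expected obstacle, so this is only a matter of choosing the right test function.
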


Utilizing the new ideas in the preceding sections, we obtain the following theorem which improves Theorem 2.8 in \cite{KC2017} three-folded. First, the result is stated for all $\frac{rs}{r+s}<p\leqslant q$ (full range) instead of $1<p\leqslant q$. Second, the weighted condition needs no bump on the target weight $u$, more precisely: $\phi(t)=t^q$ which yields $\|u^\frac{1}{q}\|_{L^q,Q}$. Last, the bump on the component weights $v_1$ and $v_2$ are made more general by introducing better integrability conditions, namely $B_{p_i,\frac{p_iq}{p}}$, that properly contains $B_{p_i}$.

\begin{theorem} \label{Strong2w-4M}
Suppose $0\leqslant\al<n$, $1<r<p_1$, $1<s<p_2$ and $\frac{rs}{r+s}<p=\frac{p_1p_2}{p_1+p_2}\leqslant q$. For any set of weights
\begin{align}\label{MaxFunctionCondition}
    (u,v_1,v_2)\in \bigcup_{(\phi_1,\phi_2,t^q)\in\mathscr{Y}_{\left(\frac{p_1}{r},\frac{p_2}{s}\right),\vec p}}\A^{(\phi_1,\,\phi_2,\,t^q)}_{\vec p}
\end{align}
we have
\begin{align*}
\|\M_\al^{r,s}(f,g)\|_{L^q(u)}\lesssim\|f\|_{L^{p_1}(v_1)}\,\|g\|_{L^{p_2}(v_2)}.
\end{align*}
\end{theorem}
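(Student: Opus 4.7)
The plan is to adapt the sparse-domination strategy developed for Theorems \ref{1pq}--\ref{pq1} to the maximal operator setting. By the $\frac{1}{3}$-trick (Theorem \ref{1/3trick}), every cube $Q \ni x$ embeds in some dyadic cube $P \in \D_t$ with $|P| \lesssim |Q|$, and the containment $Q \subseteq P$ yields $|Q|^{\al/n}\|f\|_{L^r,Q}\|g\|_{L^s,Q} \lesssim |P|^{\al/n}\|f\|_{L^r,P}\|g\|_{L^s,P}$. Consequently $\M_\al^{r,s}(f,g)(x) \lesssim \sum_{t=1}^{2^n} \M_{\al,\D_t}^{r,s}(f,g)(x)$, where $\M_{\al,\D_t}^{r,s}$ is the dyadic analog restricted to $Q \in \D_t$. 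It suffices to control each dyadic piece in $L^q(u)$.

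For fixed $t$, I would discretize the $L^q(u)$ norm via level sets. Writing $\Omega_k = \{\M_{\al,\D_t}^{r,s}(f,g) > 2^k\}$ and letting $\{Q_{k,j}\}_j$ be the maximal cubes in $\D_t$ with $|Q|^{\al/n}\|f\|_{L^r,Q}\|g\|_{L^s,Q} > 2^k$ — which are disjoint with $\Omega_k = \bigcup_j Q_{k,j}$ — the layer-cake formula gives
\begin{align*}
    \|\M_{\al,\D_t}^{r,s}(f,g)\|_{L^q(u)}^q \lesssim \sum_{k,j} \bigl(|Q_{k,j}|^{\al/n}\|f\|_{L^r,Q_{k,j}}\|g\|_{L^s,Q_{k,j}}\bigr)^q u(Q_{k,j}).
\end{align*}
A going-sparse argument parallel to Section \ref{ProofMain} — but with stopping threshold based on $\|f\|_{L^r,Q}\|g\|_{L^s,Q}$ alone (at a geometric rate of the form $2^{k(n+1)(1/r+1/s)}$) — then dominates the right side by a sum over a sparse family $\mathcal{S}_t \subset \D_t$ for which $|E_Q| \geq |Q|/2$ for the usual disjoint sets $E_Q$.

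At this point I apply the generalized Hölder inequality, writing $\|f\|_{L^r,Q} \lesssim \|fv_1^{1/p_1}\|_{\psi_1,Q}\|v_1^{-1/p_1}\|_{\phi_1,Q}$ and analogously for $g$, where $\psi_i$ is the Young function with $\psi_i^{-1}(t) = t^{1/r_i}/\phi_i^{-1}(t)$ (with $r_1 = r$, $r_2 = s$); the hypothesis $\phi_i \in \Y_{r_i,B_{p_i,p_iq/p}}$ places $\psi_i \in B_{p_i,p_iq/p}$. Condition \eqref{MaxFunctionCondition} with $\phi(t) = t^q$, which specializes $\|u^{1/q}\|_{L^q,Q} = (\dashint_Q u)^{1/q}$, supplies
\begin{align*}
    |Q|^{\al/n + 1/q - 1/p}\bigl(\tfrac{u(Q)}{|Q|}\bigr)^{1/q}\|v_1^{-1/p_1}\|_{\phi_1,Q}\|v_2^{-1/p_2}\|_{\phi_2,Q} \lesssim 1,
\end{align*}
which after substituting cancels both the $u(Q)$ factor and the $|Q|^{\al q/n}$ factor and reduces the estimate to $\sum_{Q \in \mathcal{S}_t} |Q|^{q/p}\|fv_1^{1/p_1}\|_{\psi_1,Q}^q\|gv_2^{1/p_2}\|_{\psi_2,Q}^q$. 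Since $p/p_1 + p/p_2 = 1$, Hölder with exponents $p_1/p, p_2/p$ combined with sparseness ($|Q| \leq 2|E_Q|$) converts each factor into an integral of the fractional Orlicz maximal operator $M_{\al_i,\psi_i}(fv_i^{1/p_i})$ with $\al_i = n(1/p_i - p/(p_iq))$, whereupon Theorem \ref{Bpq} closes the estimate as $\|f\|_{L^{p_1}(v_1)}^q\|g\|_{L^{p_2}(v_2)}^q$.

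The main obstacle is the sparseness step. Running the going-sparse argument with the natural weighted threshold $|Q|^{\al/n}\|f\|_{L^r,Q}\|g\|_{L^s,Q} > 2^k$ does not close, because the presence of the extra factor $|Q|^{\al/n}$ unbalances the exponents in the Hölder computation used to bound $|Q_{k,j} \cap \bigcup_i Q_{k+1,i}|$ by a fraction of $|Q_{k,j}|$. The workaround — mirroring the proofs in Section \ref{ProofMain} — is to select the stopping cubes using only the unweighted product $\|f\|_{L^r,Q}\|g\|_{L^s,Q}$ (so that the Hölder balance closes at the self-dual scale $\sigma = rs/(r+s)$) and to let the factor $|Q|^{\al/n}$ be transported through the argument and finally absorbed by the Muckenhoupt condition \eqref{MaxFunctionCondition} at the Orlicz-Hölder step.
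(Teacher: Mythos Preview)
Your overall architecture --- dyadic reduction via the $\tfrac13$-trick, level-set decomposition, sparse domination, generalized H\"older to insert the weights, then the $B_{p_i,p_iq/p}$ bounds for the fractional Orlicz maximals --- is exactly the paper's approach. The only point of disagreement is the sparseness step, and there your diagnosis is wrong.

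You assert that the stopping family $\{Q_{k,j}\}$ built from the \emph{weighted} threshold $|Q|^{\al/n}\|f\|_{L^r,Q}\|g\|_{L^s,Q}>a^k$ cannot be shown sparse because the factor $|Q|^{\al/n}$ unbalances the H\"older exponents. In fact the paper uses precisely this family and the sparseness goes through: since $\al\geqslant 0$ and $Q_{k+1,i}\subseteq Q_{k,j}$, one has $|Q_{k+1,i}|^{\al/n}\leqslant |Q_{k,j}|^{\al/n}$, so the stopping condition gives
\[
|Q_{k+1,i}|\;<\;a^{-(k+1)\sigma}\,|Q_{k,j}|^{\al\sigma/n}\Bigl(\int_{Q_{k+1,i}}|f|^r\Bigr)^{\sigma/r}\Bigl(\int_{Q_{k+1,i}}|g|^s\Bigr)^{\sigma/s},\qquad \sigma=\frac{rs}{r+s}.
\]
Now $\tfrac{\sigma}{r}+\tfrac{\sigma}{s}=1$, so H\"older over $i$ closes exactly at the self-dual scale you mention, and the parent bound on $Q_{k,j}$ finishes the estimate $\sum_i|Q_{k+1,i}|\leqslant\tfrac12|Q_{k,j}|$ for a suitable $a$. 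No workaround is needed.

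Your proposed fix --- running a \emph{separate} going-sparse process with an unweighted threshold --- does not connect to the layer-cake output. The layer-cake step has already committed you to summing over the weighted-threshold maximal cubes; substituting a different family would require either dominating that sum by a sum over all dyadic cubes (which diverges) or proving that the original family embeds into the new one with bounded multiplicity, an argument you do not supply and which essentially reduces to the sparseness you are trying to avoid. Drop the workaround: the weighted-threshold family is already sparse, and the rest of your proof is fine as written.
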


\begin{proof}[Proof of Theorem \ref{Strong2w-4M}]
Let
$$\M_\al^{r,s,\D}(f,g)(x)=\sup_{\substack{Q\in\D\\Q\ni x}}|Q|^\frac{\al}{n} \left(\dashint_Q|f|^r\right)^\frac{1}{r} \left(\dashint_Q|g|^s\right)^\frac{1}{s}$$
where $\D$ is a dyadic grid. We observe that
\begin{align*}
    \M_\al^{r,s,\D_t}(f,g)(x)\leqslant \M_\al^{r,s}(f,g)(x) \leqslant 6^{n-\al} \sum_{t=1}^{2^n}\M_\al^{r,s,\D_t}(f,g)(x)
\end{align*}
for all $f,g\in C_c^\infty(\R^n)$. Let $a>0$ to be chosen later. For each $k\in\Z$, let $\Omega_k=\{x\in\R^n:\,\M_\al^{r,s,\D}(f,g)(x)>a^k\}$, then $\Omega_k=\bigcup_jQ_{k,j}$ where $Q_{k,j}$ are pairwise-disjoint dyadic cubes that are maximal with respect to $|Q|^{\frac{\al}{n}}\left(\dashint_Q |f|^r\right)^{\frac1r}\left(\dashint_Q |g|^s\right)^{\frac1s}>a^k$. We claim that $\mathscr{S}=\{Q_{k,j}\}_{k,j}$ is sparse with an appropriate choice for $a$. The proof for this claim is similar to a part of the going-sparse process that we did in the proofs of Section \ref{ProofMain}. Therefore we have
\begin{align*}
\lefteqn{\int_{\R^n}\M_\al^{r,s,\D}(f,g)(x)^qu(x)\,dx} \hspace{12mm}  \\
& = \sum_{k\in\Z}\underset{\Omega_k\setminus\Omega_{k+1}}{\int}\M_\al^{r,s,\D}(f,g)(x)^qu(x)\,dx \\
& \leqslant \sum_{k\in\Z}a^{(k+1)q}\underset{\Omega_k\setminus\Omega_{k+1}}{\int}u(x)\,dx \\
& \lesssim \sum_{k\in\Z}\sum_{j}|Q_{k,j}|^\frac{q\al}{n}\left(\dashint_{Q_{k,j}}|f(y)|^rdy\right)^\frac{q}{r}\left(\dashint_{Q_{k,j}}|g(z)|^sdz\right)^\frac{q}{s}\int_{Q_{k,j}}u(x)\,dx \\
& \coloneq \sum_{Q\in\mathscr{S}}|Q|^\frac{q\al}{n}\left(\dashint_{Q}|f(y)|^rdy\right)^\frac{q}{r}\left(\dashint_{Q}|g(z)|^sdz\right)^\frac{q}{s}\int_{Q}u(x)\,dx
\end{align*}

Let $\phi_1,\phi_2$ be the Young functions that comes from condition \eqref{MaxFunctionCondition}, and let $\psi_1,\psi_2$ denote the inverses of $\frac{t^{1/r}}{\phi_1^{-1}(t)}$ and $\frac{t^{1/s}}{\phi_2^{-1}(t)}$ respectively, so $\psi_i\in B_{p_i,\frac{p_iq}{p}}$. By the proposed conditions of the theorem, we have
\begin{align*}
\lefteqn{\int_{\R^n}\M_\al^{r,s,\D}(f,g)(x)^qu(x)\,dx} \hspace{12mm} \\
& \lesssim \sum_{Q\in\mathscr{S}}|Q|^{\frac{q\al}{n}+1}\|fv_1^\frac{1}{p_1}\|^q_{\psi_1,Q}\,\|v_1^{-\frac{1}{p_1}}\|^q_{\phi_1,Q}\,\|gv_2^\frac{1}{p_2}\|^q_{\psi_2,Q}\,\|v_2^{-\frac{1}{p_2}}\|^q_{\phi_2,Q}\,\dashint_{Q}u(x)\,dx \\
& \lesssim \sum_{Q\in\mathscr{S}}|Q|^{\frac{q}{p}}\|fv_1^\frac{1}{p_1}\|^q_{\psi_1,Q}\,\|gv_2^\frac{1}{p_2}\|^q_{\psi_2,Q}.
\end{align*}
The rest of the proof just goes exactly the same as the last part in the proof of Theorem \ref{pq1}.
\end{proof}

We now state our maximal control theorem. We note that the latest known result (theorem 2.5 \cite{KC2017}) requires $\frac1r+\frac1s=1$, but our theorem below does not. However, our results is restricted to only $0<q\leqslant1$.

\begin{theorem} \label{MaxControl}
Let $0<q\leqslant1$ and $r,s,m\geqslant1$ be such that $\frac{1}{r}+\frac{1}{s}=1+\frac{1}{qm}$. If $w^{m'}\in \mathsf{A}_\infty$, then we have
$$\int_{\mathbb{R}^n} \bigl|BI_{\alpha}(f,g)(x)\bigr|^q \, w(x) \, dx \lesssim \int_{\R^n}\M_\al^{r,s}(f,g)(x)^q w(x)\,dx.$$
\end{theorem}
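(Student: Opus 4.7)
The plan is to execute the dyadic convolution-domination scheme from \eqref{ConvolutionDomination} used throughout Section \ref{ProofMain}, then finish with a pointwise pass-off to $\M_\al^{r,s}(f,g)$ in place of an Orlicz-maximal bound. Because $q\leqslant1$, I may raise \eqref{ConvolutionDomination} to the $q$-th power inside each summand and integrate against $w$, then apply H\"older with the exponents $m$ and $m'$ on each inner $Q$-integral. The convolution factor is then controlled by Young's convolution inequality, which applies precisely because the hypothesis reads $\tfrac1r+\tfrac1s=1+\tfrac{1}{qm}$, producing $\|f\|_{L^r,3Q}\|g\|_{L^s,3Q}$. Tallying the powers of $|Q|$, the identity $q(1/r+1/s)=q+1/m$ makes the overall exponent collapse to $q\al/n+1$, giving
$$\int_{\R^n}\mathsf{BI}_\al(f,g)(x)^qw(x)\,dx\lesssim\sum_{Q\in\D}|Q|^{q\al/n+1}\|f\|_{L^r,3Q}^q\|g\|_{L^s,3Q}^q\left(\dashint_Q w^{m'}\right)^{1/m'}.$$

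Next, Theorem \ref{1/3trick} replaces each $3Q$ by a comparable cube in one of the $2^n$ shifted grids $\D^t$, reducing matters to sums over $\D^t$. I then run the same going-sparse process used in the proof of Theorem \ref{pq1}: for each $k\in\Z$ select the maximal $\{Q_{k,j}\}_j\subset\D^t$ with $\|f\|_{L^r,Q}\|g\|_{L^s,Q}>2^{k(n+1)(1/r+1/s)}$, which is shown to be sparse by a standard H\"older-and-maximality check on the children. Grouping the remaining cubes $Q$ by generation $\ell(Q)=2^{-r}\ell(Q_{k,j})$ inside each $Q_{k,j}$, and using H\"older with exponents $(m,m')$ on the $2^{rn}$ children to absorb $\bigl(\int_Q w^{m'}\bigr)^{1/m'}$, together with the resulting geometric decay $2^{-rq\al}$ in the scale index, collapses the dyadic sum into the sparse sum
$$\sum_{Q\in\mathscr{S}_t}\bigl(|Q|^{\al/n}\|f\|_{L^r,Q}\|g\|_{L^s,Q}\bigr)^q\,|Q|\left(\dashint_Q w^{m'}\right)^{1/m'}.$$

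The finale uses both halves of the $\mathsf{A}_\infty$ hypothesis. Since $w^{m'}\in\mathsf{A}_\infty$, Theorem \ref{RH} gives $w\in\mathsf{RH}_{m'}$, so $\bigl(\dashint_Q w^{m'}\bigr)^{1/m'}\lesssim\dashint_Q w$; a short reverse-H\"older calculation also places $\mathsf{RH}_{m'}\subset\mathsf{A}_\infty$ when $m'>1$, so Lemma \ref{AinfinityProperty}(i) applied to $Q\setminus E_Q$ (which has Lebesgue measure at most $|Q|/2$) delivers $w(Q)\lesssim w(E_Q)$. Combining these with the pointwise bound $|Q|^{\al/n}\|f\|_{L^r,Q}\|g\|_{L^s,Q}\leqslant\inf_{y\in Q}\M_\al^{r,s}(f,g)(y)$ and the pairwise disjointness of $\{E_Q\}_{Q\in\mathscr{S}_t}$, the sparse sum is controlled by $\sum_{Q}\int_{E_Q}\M_\al^{r,s}(f,g)^qw\leqslant\int_{\R^n}\M_\al^{r,s}(f,g)^qw$, which closes the estimate after summing the $2^n$ contributions from the shifted grids. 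The most delicate step is the generation-by-generation H\"older summation within the going-sparse argument: because the $w^{m'}$-factor now sits on the target cube rather than on the enclosing sparse ancestor, one must verify that the combinatorial count $2^{rn/m}$ combined with the scale factor $2^{-r(q\al+n/m)}$ still produces the clean geometric series $\sum_r 2^{-rq\al}$, and this is exactly where the exponent identity $q\al/n+1/m+1/m'=q\al/n+1$ — and hence the full strength of the constraint $\tfrac1r+\tfrac1s=1+\tfrac{1}{qm}$ — is used.
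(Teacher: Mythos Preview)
Your proof is correct and follows essentially the same approach as the paper: convolution domination \eqref{ConvolutionDomination}, H\"older with $(m,m')$, Young's inequality via $\tfrac1r+\tfrac1s=1+\tfrac1{qm}$, the $\tfrac13$-trick, the going-sparse process, reverse H\"older for $w\in\mathsf{RH}_{m'}$, Lemma~\ref{AinfinityProperty}(i) to pass to $w(E_Q)$, and the pointwise domination by $\M_\al^{r,s}$. The only difference is the order of two steps: the paper invokes $w\in\mathsf{RH}_{m'}$ \emph{before} going sparse, so that the weight factor becomes the additive quantity $\int_Q w$ and the generation-by-generation H\"older step with exponents $(m,m')$ that you carefully carry out is no longer needed.
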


\begin{remark}
    When $q=1$, $m$ may equal $1$ which makes $m'=\infty$. In this case, the condition $w^{m'}\in \mathsf{A}_\infty$ would be understood as $w\in\mathsf{RH}_\infty$.
\end{remark}

When $m=\frac1q$, we have $r=s=1$, $\M_\al^{r,s}=\M_\al$ and $\mathsf{RH}_{m'}=\mathsf{RH}_{(1/q)'}$. In this case, Theorem \ref{MaxControl} coincides with Theorem 1.8 in \cite{Kabe2014}. 

\vspace{3mm}

When $m=\infty$, we have $m'=1$ and $\frac{1}{r}+\frac{1}{s}=1$. In this case, Theorem \ref{MaxControl} coincides with Theorem 2.5 in \cite{KC2017} for $q\leqslant1$.

\hfill

\begin{proof}[Proof of Theorem \ref{MaxControl}]
As indicated above, we only need to work with the case when $m\in\big(\frac1q,\infty\big)$. Since $q\leqslant1$ and $w^{m'}\in\mathsf{A}_\infty$, by Theorem \ref{RH} we know $w\in\mathsf{RH}_{m'}$. This means
$$
\left(\dashint_Qw^{m'}\right)^{\frac{1}{m'}} \leqslant C \thinspace \dashint_Qw \qquad \text{for all cubes} \,\, Q.
$$
Utilizing this fact and the Young's inequality, we have the following estimates:
\begin{align*}
    \int_{\mathbb{R}^n} \bigl|BI_{\alpha} & (f,g)(x)\bigr|^q \, w(x) \, dx \\
    & \leqslant \sum_{Q\in\mathscr{D}} |Q|^{q\left(\frac{\alpha}{n}-1\right)} \left(\int_Q \Big|\left[(f\1_{3Q})*(g\1_{3Q})\right](2x)\Big|^{qm} \, dx\right)^\frac{1}{m} \left(\int_Q w(x)^{m'} dx\right)^\frac{1}{m'} \\
    & \leqslant \sum_{Q\in\mathscr{D}} |Q|^{\frac{q\alpha}{n}+1} \left(\dashint_{3Q} |f(x)|^r \, dx\right)^\frac{q}{r} \left(\dashint_{3Q} |g(x)|^s \, dx\right)^\frac{q}{s} \left(\dashint_Q w(x)^{m'}dx\right)^\frac{1}{m'} \\
    & \lesssim \sum_{t=1}^{2^n} \sum_{Q\in\mathscr{D}_t} |Q|^{\frac{q\alpha}{n}+1} \left(\dashint_{Q} |f(x)|^r \, dx\right)^\frac{q}{r} \left(\dashint_{Q} |g(x)|^s \, dx\right)^\frac{q}{s} \left(\dashint_Q w(x)^{m'}dx\right)^\frac{1}{m'} \\
    & \lesssim \sum_{t=1}^{2^n} \sum_{Q\in\mathscr{D}_t} |Q|^{\frac{q\alpha}{n}} \left(\dashint_{Q} |f(x)|^r \, dx\right)^\frac{q}{r} \left(\dashint_{Q} |g(x)|^s \, dx\right)^\frac{q}{s} \int_Q w(x)\,dx \\
    & \lesssim \sum_{t=1}^{2^n} \sum_{Q\in\mathscr{S}_t} |Q|^{\frac{q\alpha}{n}} \left(\dashint_{Q} |f(x)|^r \, dx\right)^\frac{q}{r} \left(\dashint_{Q} |g(x)|^s \, dx\right)^\frac{q}{s} w(Q) \\
    & \lesssim \sum_{t=1}^{2^n} \sum_{Q\in\mathscr{S}_t} |Q|^{\frac{q\alpha}{n}} \left(\dashint_{Q} |f(x)|^r \, dx\right)^\frac{q}{r} \left(\dashint_{Q} |g(x)|^s \, dx\right)^\frac{q}{s} w(E_Q) \\
    & \lesssim \int_{\R^n}\M_\al^{r,s}(f,g)(x)^q \, w(x)\,dx
\end{align*}
where the third to last inequality is actually the going-sparse process that we performed in the proofs in Section \ref{ProofMain}, and the second to last inequality is due to a property of $\mathsf{A}_\infty$-weights as stated in Lemma \ref{AinfinityProperty}.
\end{proof}

\hfill

\bibliographystyle{plain}{\small\bibliography{bibtex}}

\hfill

\hfill

\end{document}